%
%
%
%
%
%
\RequirePackage{fix-cm}
\documentclass[smallcondensed,numbook,envcountsect]{svjour3}     
\smartqed  
\usepackage{graphicx}
\usepackage{amssymb,amsmath,amsfonts}
\usepackage{setspace} 
\usepackage{hyperref}
\usepackage{color}
\usepackage{fancyhdr}
\usepackage{booktabs}
\usepackage{tabularx}
\usepackage{enumerate}
\usepackage{enumitem}
\usepackage{wrapfig}
\usepackage{mdframed}
\usepackage{lmodern}
\usepackage[notref,notcite,final]{showkeys}
\usepackage[final]{pdfpages}
\usepackage{tikz}
\usetikzlibrary{positioning,arrows,shapes,snakes}
\tikzset{main node/.style={circle,draw,minimum size=0.3em,inner
sep=0.5pt}}
\tikzset{state node/.style={circle,draw,minimum size=0.5em,fill=blue!20,inner sep=0.5pt}}
\tikzset{small node/.style={circle,draw,minimum size=0.5em,inner
sep=2pt,font=\sffamily\bfseries}}
\usepackage{cancel}
\usetikzlibrary{matrix, arrows, decorations.pathmorphing}
\usepackage[retainorgcmds]{IEEEtrantools}


\newcommand{\N}{\mathbb{N}}

\newcommand{\Z}{\mathbb{Z}}

\newcommand{\cala}{\mathcal{A}}

\newcommand{\cald}{\mathcal{D}}
\newcommand{\calh}{{H}}

\newcommand{\la}{\mathbf{a}}

\newcommand{\ra}{\rightarrow}

\newcommand{\se}{\subseteq}

\newcommand{\ip}[1]{\langle#1\rangle}

\newcommand{\inverse}{^{-1}}

\newcommand{\abs}[1]{\lvert #1 \rvert}

\spnewtheorem{notation}{Notation}[section]{\bf}{\rm}
\spnewtheorem{assumption}{Assumption}[section]{\bf}{\rm}
\spnewtheorem*{thm1.4}{Theorem 1.4}{\bf}{\it}
\spnewtheorem*{thm1.1}{Theorem 1.1}{\bf}{\it}
\spnewtheorem*{thm1.2}{Theorem 1.2}{\bf}{\it}
\spnewtheorem*{thm1.3}{Theorem 1.3}{\bf}{\it}
\spnewtheorem*{proof1.1}{Proof of Theorem 1.1}{\it}{\rm}
\spnewtheorem*{proof1.2}{Proof of Theorem 1.2}{\it}{\rm}
\spnewtheorem*{proof1.3}{Proof of Theorem 1.3}{\it}{\rm}
\spnewtheorem*{proof1.4}{Proof of Theorem 1.4}{\it}{\rm}
\spnewtheorem*{proof6.1}{Proof of Theorem 6.1}{\it}{\rm}
\spnewtheorem*{proof6.2}{Proof of Theorem 6.2}{\it}{\rm}





%
%
%
%
%
\begin{document}


\title{On the subregular $J$-rings of Coxeter systems}


\author{Tianyuan Xu}


\institute{Tianyuan Xu \at
               Department of Mathematics and Statistics\\
               Queen's University,
Kingston, Ontario, Canada, K7L 3N6\\
              \email{tx7@queensu.ca}           
}

\date{}

\maketitle

\begin{abstract}

We recall Lusztig's construction of the asymptotic Hecke algebra $J$ of a
Coxeter system $(W,S)$ via the Kazhdan--Lusztig basis of the corresponding
Hecke algebra. The algebra $J$ has a direct summand $J_E$ for each two-sided
Kazhdan--Lusztig cell of $W$, and we study the summand $J_C$
corresponding to a particular cell $C$ called the subregular cell. We develop
a combinatorial method involving truncated Clebsch--Gordan rules to compute $J_C$ without using the Kazhdan--Lusztig
basis. As applications, we deduce some connections between $J_C$ and the
Coxeter diagram of $W$, and we show that for certain Coxeter systems $J_C$
contains subalgebras that are free fusion rings in the sense of  \cite{Banica},
  thereby connecting the subalgebras to {compact quantum groups} arising from operator algebra theory.
\keywords{Coxeter groups \and Hecke algebras \and Kazhdan--Lusztig cells \and
  Verlinde algebras\and compact quantum
groups \and fusion rings}
 \subclass{22F55 \and 17B37 \and 16T20 \and 81R50}
\end{abstract}



  \section{Introduction}
Hecke algebras of Coxeter systems are classical objects of study in
representation theory because of their rich connections with finite groups of
Lie type, Lie algebras, quantum groups, and the geometry of flag varieties
(see, for example, \cite{Curtis}, \cite{CurtisIwahori},
\cite{DipperJames}, \cite{GeckPfeiffer}, \cite{KL}, \cite{LusztigCharacters}). 
Let $(W,S)$ be a Coxeter system, and let $H$ be its Hecke algebra defined
over the ring $\Z[v,v\inverse]$. Using the Kazhdan--Lusztig basis of $H$, Lusztig
constructed the \emph{asymptotic Hecke algebra}
$J$ of $(W,S)$ in \cite{L2}. The algebra $J$ can be
viewed as a limit of $H$ as the parameter $v$ goes to infinity, and its
representation theory is closely related to that of $H$ (see
\cite{L2}, \cite{L3}, \cite{L4}, \cite{LG}, \cite{Geck_asymptotic}). In particular, upon
suitable extensions of scalars, $J$ admits a natural homomorphism from
$H$, hence representations of $J$ induce representations of $H$
(see \cite{LG}).

The algebra $J$ has several interesting features. First, $J$ is defined to be the free abelian group 
$J=\oplus_{w\in W}\Z t_w$, with multiplication of the basis elements
given by 
\[
  t_xt_y=\sum_{z\in W}\gamma_{x,y,z\inverse}t_z
\]
where the coefficients $\gamma_{x,y,z\inverse}$ are
nonnegative integers extracted from the structure constants
of the {Kazhdan--Lusztig basis} of $H$. The non-negativity of the structure constants
makes $J$ a \emph{$\Z_+$-ring}, and the basis elements satisfy additional
conditions which make $J$ a \emph{based ring} in the sense of
\cite{Lusztigbased} and \cite{EGNO} (see Section \ref{sec:based rings}). 


Another interesting feature of $J$ is that for any
two-sided Kazhdan--Lusztig cell
$E$ of $W$, the subgroup 
\[
  J_E=\oplus_{w\in E}\Z t_w
\] 
of $J$ is a subalgebra of $J$ and also a based ring. Here, each Kazhdan--Lusztig cell is a subset of $W$. The cells of
$W$ are defined using the Kazhdan--Lusztig basis of $H$ and form a partition of
$W$. Further, $J_E$ is in fact
a direct summand of $J$, and $J$ admits the direct sum decomposition 
\[
  J=\oplus_{E\in \mathcal{C}}J_E,
\]
where $\mathcal{C}$ denotes the collection of all two-sided cells of $W$
(see Proposition \ref{subalgebra}). Thus, it is natural to study $J$ by first
studying the algebras of the form $J_E$.

In this paper, we
focus on a particular two-sided cell $C$ of $W$ known as the
\emph{subregular cell}. We call its corresponding based ring $J_C$ the
\emph{subregular $J$-ring}; this is the ring referred to in the title of the
paper. We study $J_C$ and  subalgebras $J_s$ of $J_C$ that correspond to the generators $s\in S$
of $W$. Thanks to a result of Lusztig in \cite{subregular}, the cell $C$ can be
characterized as the set of non-identity elements in $W$ with
unique reduced words. The main theme of the paper is to exploit
this combinatorial characterization and study $J_C$ and $J_s (s\in S)$
without reference to Kazhdan--Lusztig bases. This is desirable since a main obstacle in
understanding $J$ for arbitrary Coxeter systems lies in the difficulty of
understanding Kazhdan--Lusztig bases. 

\begin{remark}
  \label{unit remark}
  It is worth mentioning that the algebra $J$, as well as the subalgebra $J_E$
  where
  $E$ is an arbitrary two-sided cell of $W$, do not
  generally have units in the usual sense. More specifically, we need to
  consider
  the set $\cald$ of \emph{distinguished involutions} of $W$ (see Equation
  \ref{eq:cald}) and $J$ has a unit element, namely $\sum_{d\in \cald}t_d$,
  only when $\cald$ is finite. On the other hand, when $\cald$ is infinite, the
  set $\{t_d:d\in \cald\}$ may be viewed as a generalized unit element of $J$
  in the sense that $t_dt_{d'}=\delta_{d,d'}$ for any $d,d'\in \cald$ and
  $\sum_{d,d'\in \cald}t_dJt_d'=J$ (see \cite{LG}, Section 18.3).  Similarly,
  $J_E$ has unit $\sum_{d\in E\cap \cald} t_d$ if the set $E\cap \cald$ is
  finite while otherwise the set $\{t_d:d\in E\cap \cald\}$ may be viewed as a
  generalized unit of $J_E$. For the subregular cell $C$, the set $C\cap \cald$
  turns out to be the generating set $S$ of $W$, therefore $J_C$ is unital (as
  we will assume $S$ is finite). For each $s\in S$, the algebra $J_s$ mentioned
  above will be unital as well, with the element $t_s$ as its unit (see Proposition \ref{subalgebra}
  and Remark
\ref{C distinguished}).
\end{remark}

A third important feature of the algebra $J$ is that it admits a very interesting 
\emph{categorification}. Here by categorification we mean the process of
adding an extra layer of structure to an algebraic object to produce an
interesting category which allows one to recover the object; more specially,
we mean that $J$ appears as the Grothendieck ring of a {tensor category}
$\mathcal{J}$ (see \cite{EGNO} for the definition of a tensor category, \cite{LG} for the construction of $\mathcal{J}$). A well-known example of categorification is the
categorification of the Hecke algebra $H$ by the {Soergel category}
$\mathcal{SB}$, which was used to prove the ``positivity properties'' of the
{Kazhdan--Lusztig basis} of $H$ in \cite{EW}. 

Just as the algebra $J$ is constructed from $H$, the category
$\mathcal{J}$ is constructed from $\mathcal{SB}$, also by
Lusztig (\cite{LG}). Further, just as the algebra $J$ has a subalgebra of the
form $J_E$ for each two-sided
cell $E$ and a subalgebra $J_s$ for each generator $s\in S$, the category
$\mathcal{J}$  has a subcategory $\mathcal{J}_E$ for each two-sided cell $E$
and a subcategory $\mathcal{J}_s$ for each $s\in S$. Moreover, 
$\mathcal{J}_E$ categorifies $J_E$ for each two-sided cell $E$, and
$\mathcal{J}_E$ is a \emph{multifusion category} in the sense of
\cite{EGNO} whenever $E$ is finite, which can happen for suitable cells even
when the ambient group $W$ is infinite. Similarly, $J_s$ is a \emph{fusion
category} whenever $J_s$ has finite rank. Multifusion and fusion
categories have rich connections with quantum groups (\cite{Kassel}),
conformal field theory (\cite{CFT}), quantum knot invariants (\cite{Turaev})
and topological quantum field theory (\cite{BKtcat}), so the categories
$\mathcal{J}_C$ and $\mathcal{J}_s (s\in S)$ are
interesting since they can potentially provide new examples of multifusion and
fusion categories.

Historically, the intimate connection between $J$ and $\mathcal{J}$ has been a major tool in the study of both
objects. For Weyl groups and affine Weyl groups, Lusztig (\cite{L4},
\cite{Lusztig_cells}) and
Bezrukanikov et al. (\cite{Bez1}, \cite{Bez2}, \cite{Bez3}) showed that there
is a bijection between the
two-sided cells in the group and unipotent conjugacy classes of
an algebraic group, and that the subcategories of  $\mathcal{J}$ corresponding
to the cells can be described geometrically, as categories of vector
bundles on a square of a finite set equivariant with respect to an algebraic
group. Using categorical results, they computed the structure constants in
$J$ explicitly. 
For other Coxeter systems, however, the nature of
$J$ or $\mathcal{J}$ seems largely unknown, partly because there is no known recourse
to advanced geometry. In this context, our paper may be viewed as an attempt
to understand the subalgebra $J_C$ of $J$ for arbitrary Coxeter systems from
a more combinatorial point of view. We hope to
understand the structure of $J_C$ by examining the multiplication rule in
$J_C$, then, in some cases, use our knowledge of $J$ to deduce the
structure of $\mathcal{J}$. This idea is further discussed in
Section \ref{sec:background}.

The main results of the paper fall into two sets. First, we
describe some connections between the {Coxeter diagram} $G$ of an arbitrary
Coxeter system $(W,S)$ and the subregular $J$-ring of $(W,S)$. The
first result in this spirit describes $J_C$ in terms of $G$ for all 
\emph{simply-laced} Coxeter systems. Recall that given any vertex $s$ in
$G$, the fundamental group $\Pi_s(G)$ of $G$ based at $s$ is the group
consisting of all homotopy equivalence classes of walks
on $G$ that start and end at $s$, equipped with concatenation as the group
operation. We generalize this notion to define the \emph{fundamental
groupoid} $\Pi(G)$ of $G$ as the set of homotopy equivalence classes of
all walks on $G$, equipped with concatenation as a partial binary
operation (see Section \ref{sec:simply-laced}). We define the groupoid algebra of $\Z\Pi(G)$ of $\Pi(G)$ by
mimicking the construction of a group algebra from a group, and we prove the following theorem. 

\begin{theorem}
  \label{simply-laced}
  Let $(W,S)$ be an any simply-laced Coxeter system, and let $G$ be its Coxeter
  diagram. Let $\Pi(G)$ be the fundamental groupoid of $G$, let
  $\Pi_s(G)$ be the fundamental group of $G$ based at $s$ for any $s\in S$, let
  $\Z\Pi(G)$ be the groupoid algebra of $\Pi(G)$, and let $\Z\Pi_s(G)$ be the
  group algebra of $\Pi_s(G)$. Then $J_C\cong \Z\Pi(G)$ as based rings, and
  $J_s\cong \Z\Pi_s(G)$ as based rings for all $s\in S$.  
\end{theorem}
\noindent The key idea behind the theorem is to find a correspondence between basis
elements of $J_C$ and classes of walks on $G$. The correspondence then
yields explicit formulas for the claimed isomorphisms.

In our second result, we study the case where $G$ is \emph{oddly-connected}.
Here by oddly-connected we mean that each pair of distinct vertices in
$G$ are connected by a path involving only edges of odd weights.
\begin{theorem}
  \label{oddly-connected}
  Let $(W,S)$ be an oddly-connected Coxeter system. Then 
  \begin{enumerate}
    \item $J_s\cong J_t$ as based rings for all $s,t\in S$. 
    \item $J_C\cong
      \mathrm{Mat}_{S\times S}(J_s)$ as based rings for all $s\in S$. In particular,
      $J_C$ is Morita equivalent to $J_s$ for all $s\in S$.
  \end{enumerate}
\end{theorem}
\noindent Once again, we will give explicit isomorphisms between the
algebras by using $G$. 

In a third result, we describe all \emph{fusion rings} that appear in the form
$J_s$ for some Coxeter system $(W,S)$ and some choice of $s\in S$. We show that any
such fusion ring is isomorphic to a ring $J_{s'}$ associated to a dihedral
system, which is in turn isomorphic to the \emph{odd part} of a
\emph{Verlinde algebra} associated to the Lie group $SU(2)$ (see Definition
\ref{verlinde def} and Corollary
\ref{fusion verlinde}).
\begin{theorem}
  \label{fusion J}
  Let $(W,S)$ be a Coxeter system such that $J_s$ is a fusion
  ring for some $s\in S$. Then there exists a dihedral Coxeter system
  $(W',S')$ such that $J_s\cong
  J_{s'}$ as based rings for either $s'\in S'$.
\end{theorem}

In our second set of results, we focus on certain specific Coxeter
systems $(W,S)$ whose Coxeter diagrams involve edges of weight $\infty$, and
show that for suitable choices of $s\in S$, $J_s$ is isomorphic to a \emph{free
fusion ring} in the sense of \cite{Banica}. A free fusion ring can be
described in terms of the data of its underlying \emph{fusion set}, and we
describe these data explicitly for each of our examples.
Furthermore, each free fusion ring we discuss is isomorphic to the
Grothendieck ring of the category of
representations of a known \emph{partition quantum group} $\mathbb{G}$, and we
will identify the group $\mathbb{G}$ in all cases.
Our main theorems appear as Theorem \ref{unitary} and
Theorem \ref{amalgamate} in sections \ref{sec:example2} and
\ref{sec:example3}, but we omit their technical statements for the
moment.

All the results mentioned above rely heavily on the following theorem, which
says that a combinatorial factorization of reduced words into \emph{dihedral segments} (see Definition \ref{dihedral
segments def}) carries over to a factorization of
basis elements in $J_C$.
\begin{theorem}[Dihedral factorization]
  \label{dihedral factorization}
  Let $x$ be the reduced word of an element in $C$, and let $x_1,
  x_2,\cdots, x_l$ be the dihedral segments of $x$. Then
  \[
    t_x=t_{x_1}\cdot t_{x_2} \cdot \cdots \cdot t_{x_l}.
  \]
\end{theorem}

The rest of the article is organized as follows. We quickly review the
construction of the asymptotic Hecke algebra $J$ from a Coxeter system in Section 2. In
Section 3, we define
the algebras $J_C$ and $J_{s} (s\in S)$ and describe their structure
as based rings. Kazhdan--Lusztig cells play an important role in this section.  We prove
Theorem \ref{dihedral factorization} and discuss the computation of
$J_C$ in Section 4. In Section 5, we prove our results on the
connections between $J_C$ and Coxeter diagrams. Finally, we
discuss our second set of results in Section 6, where we prove that certain
rings $J_s$ are free fusion rings.

\section{Asymptotic Hecke algebras}
\label{sec:prelim}
We recall the construction of asymptotic Hecke algebras from
Coxeter systems in this section. Out main references are \cite{BB} and \cite{LG}. In
particular, when we define Hecke algebras we use a normalization with
base ring $\Z[v,v\inverse]$ and with quadratic relations $(T_s-v)(T_s+v\inverse)=0$
for all simple reflections $s\in S$. 

\subsection{Coxeter systems}
\label{sec:coxter systems}
A \emph{Coxeter system} is a pair $(W,S)$ where $S$ is a finite set equipped with a map
$m: S\times S\ra \Z_{\ge 1}\cup \{\infty\}$ and $W$ is the
group presented by
\begin{equation*}
  \label{eq:coxeter definition}
  W=\langle S\;\vert\;(st)^{m(s,t)}=1,\; \forall
  s,t\in S\rangle.
\end{equation*}
Here, $W$ is called a \emph{Coxeter group}, $S$ is called its set of
\emph{simple reflections}, and the map $m$ is required to satisfy that $m(s,s)=1$ and
$m_{s,t}=m_{t,s}\ge 2$ for all distinct elements $s,t\in S$. 
The data of
a Coxeter system $(W,S)$ can be encoded via a weighted, undirected graph $G$ called
the \emph{Coxeter diagram} of $W$. By definition, $G$ has $S$ as its vertex
set, and for any $s,t\in S$, the pair $\{s,t\}$ forms an edge in $G$ exactly when
$m(s,t)\ge 3$, in which case the edge has weight $m(s,t)$. When drawing
$G$, we label each edge with its weight except for those with weight 3. We say
$(W,S)$ is \emph{simply-laced} if all edges of $G$ are unlabeled.

Let $\langle S \rangle$ be the free monoid on $S$. Then elements of $W$ are
naturally represented by words in $\ip S$. Of the words representing an element
$w$, we call each word of minimal length a \emph{reduced word} of $w$. We
call that minimal length the \emph{length} of $w$ and denote it by $l(w)$.

For future use, we recall a few facts regarding reduced words below. First, note that since
$(ss)^{m(s,s)}=(ss)^1=s^2=1$ in $W$ for each
$s\in S$, the relation $(st)^{m(s,t)}=1$ is equivalent to the relation  
$sts\cdots
=tst\cdots$ where both products have $m(s,t)$ factors. Let us call each of
these
products an \emph{$(s,t)$-braid} and
call the action of replacing one $(s,t)$-braid with the other a \emph{braid
move}. 
Then we have the following
fundamental result on reduced words.

\begin{proposition}[Matsumoto-Tits Theorem. \cite{LG}, Theorem 1.9]
  \label{Matsumoto}
  Any two reduced words of an element in $W$ can be obtained from each
  other by a finite sequence of braid moves.  
\end{proposition}

Next, for any $w\in W$, we define the \emph{left descent set} and \emph{right descent set}
of $w$ to be the sets
\[
  \mathcal{L}(w)=\{s\in S: l(sw)<l(w)\}\quad\text{and}\quad
  \mathcal{R}(w)=\{s\in S: l(ws)<l(w)\},
\]
respectively. Descent sets can be characterized in terms of reduced words
as follows.

\begin{proposition}[\cite{BB}, Corollary 1.4.6]
  \label{descent}
  Let $s\in S$ and $x\in W$. Then
  \begin{enumerate}
  \item $s\in \mathcal{L}(w)$ if and only if $w$ has a reduced
  word beginning with $s$;
  \item $s\in \mathcal{R}(w)$ if and only if $w$ has a reduced
  word ending with $s$.
\end{enumerate}
\end{proposition}

Finally, we recall that each Coxeter group admits a partial order $\le$ called
the \emph{Bruhat order}. Define a \emph{subword} of any word $s_1s_2\cdots
s_k\in \ip S$ to be a word of the form $s_{i_1}s_{i_2}\cdots s_{i_l}$ where
$1\le i_1<i_2< \cdots < i_l\le k$. Then the Bruhat order can also be
characterized in terms of reduced word, in the following way.

\begin{proposition}[\cite{BB}, Corollary 2.2.3]
  \label{subword propositionerty}
  Let $x,y\in W$. Then the following are equivalent:
  \begin{enumerate}
  \item $x\le y$;
\item every reduced word for
  $y$ contains a subword that is a reduced word for $x$;
\item some reduced word for $y$ contains a subword that is a
  reduced word for $x$. 
\end{enumerate}
\end{proposition}

\subsection{Hecke algebras}
\label{sec:hecke algebras}

Let $(W,S)$ be an arbitrary Coxeter system, and let $\mathcal{A}=\Z[v,v\inverse]$. Following \cite{LG}, we define the
\emph{Hecke algebra} of $(W,S)$ to be the unital
$\mathcal{A}$-algebra $\calh$ generated by the set $\{T_s:s\in S\}$ subject to
the relations 
\begin{equation} \label{eq:quadratic}
  (T_s-v)(T_s+v\inverse)=0 
\end{equation}
for all $s\in S$ and the relations
\begin{equation}
  T_sT_tT_s\cdots=T_tT_sT_t\cdots  \label{eq:hecke-braid}
\end{equation}
for all $s,t\in S$, where both sides have $m(s,t)$ factors. 

Let $x\in W$, let $s_1s_2\cdots s_k$  be any reduced word of $x$, and set
$T_x:=T_{s_1}\cdots T_{s_k}$. Note that all reduced words of
$x$ produce the same element as $T_x$ by Proposition \ref{Matsumoto} and
Equation \eqref{eq:hecke-braid}, therefore $T_x$ is well-defined. Indeed, it is
well-known that
 the set $\{T_x:x\in W\}$ forms an $\cala$-basis, called the \emph{standard
 basis}, of
$\calh$. 

It is easy to check that there is a unique ring homomorphism $\bar{}: H\ra H$
such that $\bar v=v\inverse$ and $\bar T_{s}=T_s\inverse$, and that
$\bar{}$\, is in fact an involution which sends $T_w$ to
$T_{w\inverse}^{-1}$. Let $\cala_{<0}=\sum_{n:n<0}\Z
v^n,\cala_{\le 0}=\sum_{n:n\le 0}\Z v^n$, $H_{<0}=\sum_{w\in W}\cala_{<0} T_w$
and $H_{\le 0}=\sum_{w\in W}\cala_{\le 0}T_w$. Then the following holds. 

\begin{proposition}[\cite{LG}, Theorem 5.2]
  For $w\in W$, there exists a unique element $c_w\in H_{\le 0}$ such that 
  $\bar c_w=c_w$ and $c_w=T_w\mod H_{<0}$. Moreover, the set $\{c_w:w\in W\}$
  forms an $\cala$-basis of $H$.
\end{proposition}
\noindent We call the basis $\{c_x:x\in W\}$ the
\emph{Kazhdan--Lusztig basis} of $H$, and define the \emph{Kazhdan--Lusztig polynomials} to
be the elements $p_{x,y}\in \cala_{\le 0}$ for which
\[
  c_y=\sum_{x\in W} p_{x,y} T_x
\]
for all $x,y\in W$. It is worth noting that these definitions differ slightly
from the definitions of the Kazhdan--Lusztig basis $\{C_w:w\in W\}$ and the
Kazhdan--Lusztig polynomials $\{P_{x,y}:x,y\in W\}$ in the paper
\cite{KL} where these
notions were first introduced.  However, the difference is not essential and
only a result of the
difference in the normalizations of $H$, and it is easy to translate between the
two conventions. In particular, the base ring of
$H$ is the ring $\Z[q]$ and $P_{x,y}\in \Z[q]$ for any $x,y\in W$ in \cite{KL},
but we may obtain
$p_{x,y}$ from $P_{x,y}$ by substituting $q$ by $v^2$ in
$P(x,y)$ and then multiplying the result by $v^{l(x)-l(y)}$.

\begin{notation}
  From now on we will mention the phrase ``Kazhdan--Lusztig'' numerous times. We
  will often abbreviate it to ``KL''.
\end{notation}

The KL basis and the KL polynomials enjoy a remarkable ``positivity'' property.
More precisely, let $h_{x,y,z}\in \Z[v,v\inverse]$ ($x,y,z\in W$) be the
elements such that  
 \begin{equation}
  c_xc_y=\sum_{z\in W}h_{x,y,z}c_z,
  \label{eq:h polynomials}
\end{equation}
then by positivity we mean that the elements $h_{x,y,z}$ and $p_{x,y}$ always
have non-negative integer coefficients. This result, known as the
\emph{Kazhdan--Lusztig positivity conjecture}, was first conjectured in \cite{KL} and only recently proven in
\cite{EW}:

\begin{proposition}[\cite{EW}, Corollary 1.2]
  \label{positivity}
  {\hspace{2em}}
  \begin{enumerate}
    \item $p_{x,y}\in \N[v\inverse]$ for all $x,y\in W$.
    \item $h_{x,y,z}\in \N[v,v\inverse]$ for all $x,y,z\in W$.
  \end{enumerate}
\end{proposition}

Let us record a multiplication formula of the KL basis for
future use. For $x,y\in W$, let $\mu_{x,y}$ denote the coefficient of $v^{-1}$
in $p_{x,y}$. Then the following holds.
\begin{proposition}[\cite{LG}, Theorem 6.6, Corollary 6.7]
  \label{KL basis mult}
  Let $y\in W$, $s\in S$, and let $\le$ be the Bruhat order on $W$. Then
  \begin{eqnarray*}
    c_s c_y&=& 
    \begin{cases}
      (v+v\inverse) c_y&\quad\qquad\text{if}\quad sy<y\\
      c_{sy}+\sum\limits_{z:sx<x<y} \mu_{x,y}c_z&\quad\qquad \text{if} \quad sy>y
    \end{cases},\\
    &&\\
    c_y c_s&=& 
    \begin{cases}
      (v+v\inverse) c_y&\quad \text{if}\quad ys<y\\
      c_{ys}+\sum\limits_{x:xs<x<y} \mu_{x^{-1},y^{-1}}c_x&\quad \text{if} \quad sy>y
    \end{cases}.
    \\
  \end{eqnarray*}
\end{proposition}

\subsection{Asymptotic Hecke algebras}
\label{aha}
Let $(W,S)$ be a Coxeter system with
Hecke algebra $H$, let $h_{x,y,z}$ be as in 
Equation \eqref{eq:h polynomials}, and let $f_{x,y,z}$ be the structure
constants of $H$ with respect to the standard basis of $H$, so that
\[
  T_xT_y=\sum_{z\in W} f_{x,y,z}T_z
\]
for all $x,y\in W$. We say that $W$ is \emph{bounded} if there is a
nonnegative integer $N$ such that $v^{-N}f_{x,y,z}\in \Z[v\inverse]$ for all
$x,y,z\in W$.  All finite Coxeter groups and affine Weyl groups are known to be
bounded, and it is conjectured by Lusztig that all Coxeter groups are bounded
(see \cite{LG}, Conjecture 13.4). The conjecture is crucial for the study of
asymptotic Hecke algebras, and we shall assume that it is true
for the rest of the paper:
\begin{assumption}
  \label{bdd}
  From now on, we assume that all Coxeter groups are bounded.
\end{assumption}

Let $z\in W$. Under Assumption \ref{bdd}, it is known (see \cite{LG}, Chapter 13) that there exists a unique integer
$\mathbf{a}(z)\ge 0$ that satisfies the conditions 
\begin{enumerate}
  \item[(a)] $h_{x,y,z}\in v^{\mathbf{a}(z)}\Z[v\inverse]$ for all $x,y\in W$,

  \item[(b)] $h_{x,y,z}\not\in v^{\mathbf{a}(z)-1}\Z[v\inverse]$ for some $x,y\in W$. 
\end{enumerate}
For all $x,y\in W$, we define $\gamma_{x,y,z\inverse}$ to be the integer such that
\[
  h_{x,y,z}=\gamma_{x,y,z\inverse}v^{\la(z)}\,\mod v^{\la(z)-1}\Z[v\inverse].
\]

The \emph{asymptotic Hecke algebra} of $(W,S)$ is defined to be the free
abelian group $J=\oplus_{w\in W}\Z t_w$, with multiplication declared by
\begin{equation}
  \label{eq:J mult}
  t_{x}t_y=\sum_{z\in W} \gamma_{x,y,z\inverse} t_z
\end{equation}
for all $x,y\in W$. The multiplication is well-defined (i.e.,
$\gamma_{x,y,z\inverse}=0$ for all but finitely many $z\in W$ for all $x,y\in
W$) and associative, hence $J$ is indeed a ring (see \cite{LG}, Section 18.3). Henceforth,
we will also often simply call $J$ the \emph{$J$-ring} of $(W,S)$.

Recall that $p_{x,y}\in \cala_{\le 0}= \Z[v\inverse]$ for any $x,y\in
W$. Thus, for each $y\in W$, there exists a unique non-negative integer,
$\Delta(y)$, such that
\begin{equation}
  \label{eq:Delta}
  p_{1,y}\in n_yv^{-\Delta(y)}+v^{-\Delta(y)-1}\Z[v\inverse]
\end{equation}
for some $n_y\neq 0$.  Let
\begin{equation}
  \label{eq:cald}
  \cald=\{y\in W:\la(y)=\Delta(y)\}. 
\end{equation}
Then it is known that $d^2=1$ for all $d\in \cald$, and we call $\cald$ the set of
\emph{distinguished involutions} of $W$. 
As mentioned in Remark \ref{unit remark}, the set $\cald$ is intimately
related to the multiplicative structure of $J$. We will recall more facts about $\cald$
in Section \ref{sec:cells}.

\section{The subregular $J$-ring}
In this section, we recall the definition and some properties of
Kazhdan--Lusztig cells, then define our main object of study---the subregular
$J$-ring. We will also describe
the structure of the subregular $J$-ring as a based ring. 

The notations of Section \ref{sec:prelim} and Assumption \ref{bdd} remain in
force.
\subsection{Kazhdan--Lusztig cells}
\label{sec:cells}
For each $x\in W$ let $D_x:\calh\ra\cala$ be the linear map such that 
\[
  D_x(c_{y})=\delta_{x,y}
\]
for all $y\in W$. For $x,y\in W$, 
\begin{enumerate}
  \item define $x\prec_{L} y$ if $D_x(c_sc_{y})\neq 0$ for some $s\in S$;
  \item define $x\le_L y$ if there is a sequence $x=z_1,z_2,\cdots, z_n=y$ in
    $W$ such that $z_i\prec_L z_{i+1}$ for all $1\le i\le n-1$;
  \item define $x\sim_L y$ if $x\le_L y$ and $y\le_L x$.
\end{enumerate}
Then $\sim_L$ is an equivalence relation. We call the equivalence classes the
\emph{left Kazhdan--Lusztig cells} of $W$. We may similarly define
\emph{right Kazhdan--Lusztig cells} and \emph{two-sided Kazhdan--Lusztig
cells}, where for the latter we replace the ``$\le_L$'' in Step (2) by declaring
$x\le_{LR} y$ if
there exists a sequence $x=z_1,\cdots, z_n=y$ in $W$ such that
$z_{i}\prec_L z_{i+1}$ or $z_{i}\prec_R z_{i+1}$ for all $1\le i\le n-1$.
Clearly, each 2-sided KL cell is a union of left cells as
well as a union of right cells. 

KL cells enjoy many nice properties that are important to this paper. 
Let us first observe the following.
\begin{proposition}[\cite{LG}, Lemma 8.2]
  \label{KL order and ideals}
  Let $y\in W$. Then
  \begin{enumerate}
\item the set $\calh_{\le_L y}:=\oplus_{x:x\le_L y} \cala c_x$ is a
  left ideal of $\calh$;
\item the set $\calh_{\le_R y}:=\oplus_{x:x\le_R y} \cala c_x$ is a right ideal
  of $\calh$;
\item the set $\calh_{\le_{LR} y}:=\oplus_{x:x\le_{LR} y} \cala c_x$ is a
  two-sided ideal of $\calh$.
\end{enumerate}
\end{proposition}

\begin{proof}
  The definition of $\prec_L$ guarantees that $H_{\le y}$ is closed under left multiplication
  by $c_s$ for each $s\in S$. Since the elements $c_s$ generate $H$ by
  Proposition \ref{KL basis mult}, it follows that $H_{\le y}$ is a left
  ideal, proving (1). The proofs of (2) and (3) are similar. 
\qed\end{proof}

Next, we recall two  compatibility  results on cells and  the inverse map on $W$. 
\begin{proposition}[\cite{LG}, Section 8.1]
  \label{inverse and cells}
  The map $w\mapsto w\inverse$
  takes left cells in $W$ to right cells, right cells to left cells, and
  2-sided cells to 2-sided cells. 
\end{proposition}

\begin{proposition}[\cite{LG}, Conjecture 14.2]
For any $w\in W$, we have $w\sim_{LR} w\inverse$.
\end{proposition}

\begin{remark}
 The book \cite{LG} studies Hecke
  algebras in a more general setting than ours, namely with possibly \emph{unequal
  parameters}. The above statement appears as a
  conjecture in Section 14 of \cite{LG} but is  known to be true in
  our setting, which is called the \emph{equal parameter} or the \emph{split}
  case in the book. The same is true for all other statements we shall quote from
  Section 14 of \cite{LG}. The proofs of the statements rely on Proposition
  \ref{positivity} and can be found in Chapter 15 of \cite{LG}.  
\end{remark} 

KL cells have close connections with distinguished involutions and the
structure constants of the $J$-ring. In particular, we have the following
facts.
\begin{proposition}[\cite{LG}, Conjecture 14.2]\hfill
  \begin{enumerate}
  \item 
  Each left KL cell $\Gamma$ of $W$ contains a unique $d\in \cald$.
  We have $\gamma_{x\inverse,x,d}\neq 0$ for all $x\in\Gamma$.
\item  If $\gamma_{x,y,z}\neq 0$ for $x,y,z\in W$, then $x\sim_L
  y\inverse, y\sim_L z\inverse, z\sim_L x\inverse$. 
\item If $\gamma_{x,y,d}\neq 0$ for $x,y\in W$ and $d\in \cald$, then
  $x=y\inverse$. 
\item For each $x\in W$, there is a unique $d\in \cald$ such that
  $\gamma_{x,x\inverse,d}\neq 0$.
  \end{enumerate}
  \label{gamma and cells}
\end{proposition}

Finally, we explain how cells give rise to subalgebras of the $J$-ring. Recall that
$J=\sum_{t\in W}\Z t_w$ as a group. For any subset $X\se W$, define
$J_X:=\sum_{w\in X}\Z t_w$, the subgroup supported on $X$.  
Then the following
holds.

\begin{proposition}[\cite{LG}, Section 18.3]\hfill
  \label{subalgebra}
  \begin{enumerate}
    \item Let $\Gamma$ be any left KL cell in $W$, and let $d$ be the
      unique element of $\Gamma\cap\cald$. Then the subgroup $J_{\Gamma\cap \Gamma\inverse}$ is a unital subalgebra of $J$; its unit is $t_d$.  
    \item Let $E$ be any two-sided cell
      $E$ in $W$. Then $J_E$ is a subalgebra of $J$. Further, if
     $E \cap \cald$ is finite, then
      $J_E$ is a unital algebra with unit $\sum_{d\in E\cap \cald}
      t_d$.
\item We have a
      direct sum decomposition $J=\oplus_{E\in \mathcal{C}}J_E$ of algebras, where
      $\mathcal{C}$ is the collection of all two-sided KL cells of $W$.  
    \end{enumerate} 
  \end{proposition}

\subsection{The subregular $J$-ring}
Consider the following proposition.
\begin{proposition}[\cite{subregular}, Theorem 3.8]
  \label{subregular}
  Let $C$ denote the set of all non-identity elements in $W$ with a unique reduced
  word. For each $s\in S$, let $\Gamma_s$ be the sets of
  elements in $C$ whose reduced
  word ends in $s$. Then $C$ is a two-sided Kazhdan--Lusztig cell of $W$, and $\Gamma_s$
  is a left Kazhdan--Lusztig cell of $W$ for each $s\in S$.
\end{proposition}

From now on, we shall call the above set $C$ the \emph{subregular
cell} of $W$ and reserve the notation $C$ for this cell.
We call the subalgebra $J_C$ of $J$ the \emph{subregular
  $J$-ring} of $(W,S)$. For each $s\in S$, we write
  $J_s:=J_{\Gamma_s\cap\Gamma_s\inverse}$.  The rest of the paper is dedicated
  to the study of the algebras $J_c$ and $J_s (s\in S)$.

Recall the functions $\la, \Delta: W\ra \Z_{\ge 0}$ from Section
\ref{aha}. To us, an important feature of the subregular cell is that it
is exactly the set of elements of $\la$-value 1, thanks to the following
facts.

\begin{proposition}[\cite{LG}, 13.7, 14.2] \label{a and cells}
  Let $x,y\in W$. Then 
  \begin{enumerate} 
    \item   $\la(x)\le \Delta(x)$.  
    \item $\mathbf{a}(x)\ge 0$, where
      $\la(x)=0$ if and only if $x$ equals the identity element of $W$.  
    \item If $x\le_{LR} y$, then $\la(x)\ge \la(y)$.
      Hence, if $x\sim_{LR} y$, then $\la(x)=\la(y)$.  
    \item If $x\le_{LR} y$ and
      $\la(x)=\la(y)$, then $x\sim_{LR} y$.  
  \end{enumerate} 
\end{proposition}

\begin{corollary} 
  \label{a=1} 
  Let $x\in W$. Then $\la(x)=1$ if and only if  $x\in C$.
\end{corollary}
\begin{proof} 
  We first prove that $\la(x)=1$ for all $x\in C$. Let $s\in S$. Using Equation \ref{eq:quadratic}, it is easy to check
  that $c_s=T_s+v\inverse$, therefore $\Delta(s)=1$  and $\la(s)\le 1$ by part
  (1) of Proposition  \ref{a and cells}. Meanwhile, Part (2) implies $\la(s)\ge 1$, therefore 
  $\la(s)=1$. Since $s$ is clearly in $C$, Part (3) implies that $\la(x)=1$ for all
  $x\in C$. 

  It remains to prove that $\la(x)\neq 1$ for any $x\in W\setminus C$. Let
  $x\in W\setminus C$. Then either $x$ is the group identity and
  $\la(x)=0$ by Part (1) of Proposition \ref{a and cells}, or $x$ has a reduced expression $x=s_1s_2\cdots s_k$ with $k>1$
  and each $s_i\in S$.  In the latter case, $x\le_L s_k$ by Proposition \ref{KL basis
  mult}, so $\la(x)\ge \la(s_k)=1$. Meanwhile, since  $x\not\sim_{LR} s_k$,
  Part (4) of Proposition  \ref{a and cells} implies that $\la(x)\neq \la(s_k)$, therefore $\la(x)>1$ and we are done.  \qed\end{proof}

\begin{remark}
  \label{C distinguished}
  By the proof of the corollary, the distinguished involutions in
  $C$ are exactly the simple reflections of the Coxeter system.
  Consequently, $J_C$ has unit
  $\sum_{s\in S}t_s$ and $J_s$ has unit $t_s$ for each $s\in S$.  
\end{remark}

  \subsection{Based ring structure of $J_C$}
  \label{sec:based rings}
Thanks to the positivity of its structure constants and certain other
properties, the subregular $J$-ring is a 
based ring in the sense of \cite{EGNO}. We prove this claim now. 

Let us first recall the relevant
definitions from
Chapter 3 of \cite{EGNO}. 
\begin{definition}[$\Z_{+}$-ring]
    Let $A$ be a ring which is free as a $\Z$-module.
    \begin{enumerate}
      \item A \emph{$\Z_+$-basis} of $A$ is a basis $B=\{t_i\}_{i\in I}$ such
        that for all $i,j\in I$, 
        $t_it_j=\sum_{k\in I}c_{ij}^k t_k$ where $c_{ij}^k\in \Z_{\ge 0}$ for all
        $k\in I$.
      \item A \emph{$\Z_+$-ring} is a ring with a fixed $\Z_+$-basis and with
        identity 1 which is a nonnegative linear combination of the basis
        elements.
      \item A \emph{unital} $\Z_+$-ring is a $\Z_+$ ring such that 1 is a basis
        element.
    \end{enumerate}
  \end{definition}

  Let $A$ be a $\Z_+$-ring, and let $I_0$ be the set of $i\in I$ such that
  $t_i$ occurs in the decomposition of 1. We call the
  elements of  $I_0$ the \emph{distinguished index set}. Let $\tau: A\ra \Z$ denote the
  group homomorphism defined by 
  \[
    \tau(t_i)=
    \begin{cases}
      1 &\quad \text{if}\quad i\in I_0,\\
      0 &\quad \text{if}\quad i\not\in I_0.
    \end{cases}
  \]

  \begin{definition}[Based rings]
    \label{based rings def}
    A $\Z_+$-ring $A$ with a basis $\{t_i\}_{i\in I}$ is called a
    \emph{based ring} if there exists an involution $i\mapsto i^*$ such that
    the induced map 
    \[
      a=\sum_{i\in I} c_it_i\mapsto a^*:=\sum_{i\in I} c_it_{i^*}, c_i\in \Z
    \]
    is an anti-involution of the ring $A$, and 
    \begin{equation}\label{eq:based ring}
      \tau(t_it_j)=
      \begin{cases}
        1 &\quad \text{if}\quad i=j^*,\\
        0 &\quad \text{if}\quad i\neq j^*.
      \end{cases}
    \end{equation}
    We denote the data of the based ring by $(A,I,I_0,*)$.
  \end{definition}

  \begin{definition}[Multifusion rings and fusion rings]
    \label{fusion def}
    A \emph{multifusion ring} is a based ring of finite rank. A \emph{fusion
    ring} is
    a unital based ring of finite rank. 
  \end{definition}

We shall reserve the meaning of $I,I_0$ and $*$ from the previous definitions
  throughout the paper.
   We now describe the based ring structure of the subregular
  $J$-ring.  
  
  \begin{proposition}
    \label{based structure}\hfill
    \begin{enumerate}
      \item  Let $E$ be any 2-sided KL cell in $W$ that contains finitely many
        distinguished involutions. Then the algebra $J_E$ is a based ring
      with
   $I=E$, $I_0=E\cap \cald$ and $x^*=x\inverse$ for all $x\in I$.
      \item Let $\Gamma$ be any left KL cell in $W$, and let $d$ be the unique
        element in $\Gamma\cap \cald$. Then $J_{\Gamma\cap\Gamma\inverse}$ is a
        unital based ring with $I=\Gamma\cap\Gamma\inverse$, $I_0=\{d\}$ and
        $x^*=x\inverse$ for all $x\in I$.
    \end{enumerate}
  \end{proposition}
  \begin{proof}
    (1) The set $\{t_x\}_{x\in E}$ forms a $\Z_+$-basis of $J_E$ by the
    definition of $J_E$, and $J_E$ is $\Z_+$-ring with distinguished index
    set $E\cap \cald$ by Part (3) of Corollary \ref{subalgebra}. The fact that $x\mapsto x\inverse$ induces an anti-involution holds
    because $\gamma_{x,y,z}=\gamma_{y\inverse,x\inverse,z\inverse}$ by symmetry (see
    Proposition 13.9 of \cite{LG}). Finally, Equation \eqref{eq:based ring}
    follows from Proposition \ref{gamma and cells}. We have now proven the
    claim.

    (2) The proof is similar to the previous part, with the only
    difference being that $J_{\Gamma\cap\Gamma\inverse}$ is unital with
    $I_0=\{d\}$, for $t_d$ is its unit by Part (1) of Corollary \ref{subalgebra}.
  \qed\end{proof}

  \begin{corollary}
    \label{subregular base}
    Let $(W,S)$ be a Coxeter system (recall that $S$ is finite by definition). Let $C, \Gamma_s, J_C$ and $J_s$ be
    as before. Then
    \begin{enumerate}
      \item $J_C$ is a based ring with $I=C$, $I_0=S$ and $x^*=x\inverse$ for
        all $x\in I$.
      \item For each $s\in S$, $J_s$ is a based ring with
        $I=\Gamma_s\cap\Gamma_s^{-1}$, $I_0=\{s\}$
        and         $x^*=x\inverse$ for all $x\in I$.
    \end{enumerate}
  \end{corollary}
  \begin{proof}
    This is immediate from Proposition \ref{based structure} and Remark
    \ref{C distinguished}.  \qed\end{proof}

  Let us formulate the notion of an isomorphism of based rings.
  Naturally, we define it to be a ring isomorphism that respects all the
  additional structures of a based ring.

  \begin{definition}[Isomorphism of Based Rings]
    Let $(A,I,I_0,*)$ and $(B,J,J_0,*)$ be the data of two based rings.  We define an
    \emph{isomorphism of based rings} from $A$ to $B$ to be a unit-preserving
    ring isomorphism $\Phi: A\mapsto B$ such that $\Phi(t_i)=t_{\phi(i)}$ for
    all $i\in I$, where $\phi$ is a bijection from $I$ to $J$ such that
    $\phi(I_0)=J_0$ and $\Phi(t_i^*)=(\Phi(t_i))^*$ for all $i\in I$.
  \end{definition}

All the main results of the paper will assert that $J_C$ or some $J_s$ is isomorphic to a
  certain ring as a based ring.

  \section{Computation of $J_C$}
  We develop an approach to compute the subregular $J$-ring in this section.
  To do so, we 
provide an algorithm to compute products of the basis elements of $J_C$ and
define a graph to help enumerate the elements of $C$. We will work out an
example at the end of the section.

\subsection{A filtration of $H$}
We hope to understand products of the form $t_x\cdot t_y$ where $x,y\in C$. 
By the construction of the $J$-ring, in order to do so we need to
carefully examine the product $c_x\cdot c_y$ in the Hecke algebra $H$.   The goal of this subsection
is to show that in fact it suffices to examine $c_x\cdot c_y$ in a subquotient of $H$ instead. This will prove to be a useful
simplification.

To define the said subquotient, view $H$ as a regular left module. By Proposition \ref{KL order and ideals} and  Proposition
\ref{a and cells}, $H$ admits a filtration of left submodules
\begin{equation*}
  \cdots\subset \calh_{\ge 2}\subset \calh_{\ge 1}\subset \calh_{\ge 0}=\calh
\end{equation*}
where
\[
  \calh_{\ge a}=\oplus_{w: \la(w)\ge a} \cala c_w
\]
for each $a\in \N$. It induces the quotient modules 
\begin{equation}
  \label{eq:quotient}
  \calh_a:=\calh_{\ge a}/\calh_{\ge a+1},
\end{equation}
where $\calh_a$ is spanned by the images of the elements $\{c_w: \la(w)=a\}$. In
particular, 
$\calh_1$ is spanned by the images of $\{c_w:w\in C\}$ by Proposition
\ref{a=1}. Thus, to compute a product $t_x\cdot t_y$ in $J_c$, it suffices to consider
the product $c_x\cdot c_y$ in $\calh_1$. More precisely, we 
have arrived at the following simplification.

\begin{corollary} \label{J_C shortcut} Let $x,y\in C$. Suppose \[
  c_xc_y=\sum_{z\in W} h_{x,y,z} c_z \] for $h_{x,y,z}\in \cala$. Then \[
  t_xt_y=\sum_{z\in T}\gamma_{x,y,z\inverse} t_z \] in $J_C$, where $T=\{z\in
    C: h_{x,y,z}\in n_z v+\Z[v\inverse] \;\text{for some}\; n_z\neq 0\}$.
  \end{corollary}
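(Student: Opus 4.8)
The plan is to extract the corollary directly from the definition of the structure constants $\gamma_{x,y,z\inverse}$ together with the $\la$-function characterization of $C$ from Corollary \ref{a=1}. Recall that by definition $h_{x,y,z}=\gamma_{x,y,z\inverse}v^{\la(z)}\pmod{v^{\la(z)-1}\Z[v\inverse]}$, so $\gamma_{x,y,z\inverse}$ is precisely the coefficient of $v^{\la(z)}$ in $h_{x,y,z}$, with the understanding that $h_{x,y,z}\in v^{\la(z)}\Z[v\inverse]$. Thus, for a given $z$, whether $\gamma_{x,y,z\inverse}\neq 0$ and what its value is depends only on the ``leading term'' of $h_{x,y,z}$ at the critical power $v^{\la(z)}$.

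First I would dispose of the terms with $z\notin C$. By Part (2) of Proposition \ref{gamma and cells}, if $\gamma_{x,y,z\inverse}\neq 0$ then $z\inverse\sim_L x$, and since $x\in C$ and $C$ is a two-sided cell (Theorem \ref{subregular}), we get $z\inverse\in C$, hence $z\in C$ (as $C$ is stable under inversion by Proposition \ref{inverse-cells}, or directly since $C$ is the set of elements with unique reduced expression, a property preserved by inversion). So the sum $t_xt_y=\sum_{z\in W}\gamma_{x,y,z\inverse}t_z$ already ranges only over $z\in C$, which matches the fact that $J_C$ is a subalgebra.

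Next, for $z\in C$ we have $\la(z)=1$ by Corollary \ref{a=1}. Hence $\gamma_{x,y,z\inverse}$ is the coefficient of $v^{1}=v$ in $h_{x,y,z}$, and by condition (a) defining $\la(z)$ we know $h_{x,y,z}\in v\Z[v\inverse]$, i.e. $h_{x,y,z}$ has the form $n_z v+(\text{lower order terms in }\Z[v\inverse])$ for some integer $n_z$ (possibly zero). Therefore $\gamma_{x,y,z\inverse}=n_z$, and $\gamma_{x,y,z\inverse}\neq 0$ exactly when $n_z\neq 0$, i.e. exactly when $z\in T$ as defined in the statement; moreover in that case $\gamma_{x,y,z\inverse}=n_z$ is the coefficient of $v$ in $h_{x,y,z}$. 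Combining the two steps, $t_xt_y=\sum_{z\in C}\gamma_{x,y,z\inverse}t_z=\sum_{z\in T}\gamma_{x,y,z\inverse}t_z$, which is the claimed formula.

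I do not anticipate a genuine obstacle here; the only point requiring a little care is making sure that $h_{x,y,z}\in v\Z[v\inverse]$ for \emph{all} $z\in C$ (not merely that this holds for the specific $z$ attaining the minimum in condition (b)), and that there is no contribution from a power of $v$ higher than $v^{1}$. The first follows from condition (a) in the definition of $\la(z)$ applied with $\la(z)=1$; the second is automatic since $h_{x,y,z}\in v^{\la(z)}\Z[v\inverse]=v\Z[v\inverse]$ means all terms have $v$-degree at most $1$. One should also remark that the integer $n_z$ in the statement is well-defined (it is simply the $v$-coefficient of $h_{x,y,z}$), so the description of $T$ is unambiguous. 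This makes the corollary essentially a bookkeeping consequence of unwinding definitions once Corollary \ref{a=1} is in hand.
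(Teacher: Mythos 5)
Your argument is correct and is essentially the paper's own: the corollary is presented there as direct bookkeeping from the definition of $\gamma_{x,y,z\inverse}$ together with Corollary \ref{a=1} (phrased via the filtration $\calh_{\ge a}$ and the quotient module $\calh_1$, which is just another way of saying that for $z\in C$ one reads off the coefficient of $v=v^{\la(z)}$ in $h_{x,y,z}$). One tiny slip: part (2) of Proposition \ref{gamma and cells} applied to $\gamma_{x,y,z\inverse}$ gives $y\sim_L z$ and $z\inverse\sim_L x\inverse$ rather than $z\inverse\sim_L x$, but either relation yields $z\in C$ (using, as you note, that $C$ is stable under inversion), so the conclusion stands.
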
 \noindent

  \subsection{The dihedral case} 
  \label{sec:dihedral case} 
  Let $(W,S)$ be the dihedral Coxeter system
  with $S=\{s,t\}$ and $M:=m(s,t)\in
  \Z_{\ge 3}\cup \{\infty\}$. With Corollary \ref{J_C shortcut} in place, we
  are ready to compute the subregular $J$-ring of $W$. 
  
  Let us set up some notation. For $k\in \Z_{\ge 0}$ and $u,v\in S$, we will
  use $u_k v$ to denote the word $w$ of
  length $k$ that starts with $u$, alternates in $s$ and $t$, and ends
  in $v$. Of course, $k$ and $u$ automatically determine the value of $v$, and $k,v$
  determine the value of $u$, but we will keep the notation in cases where we wish to emphasize both
  the first and last letter of $w$. When such emphasis is not needed, we
  will omit one of $u$ and $v$. For example, we will write $s_3$ or ${}_3 s$ for $sts$ and
  $s_4$ or ${}_4t$ for $stst$.
  
  It is well-known that the sets $\Gamma_s=\{ {}_k s:1\le k<M\}$ and
  $\Gamma_t=\{ {}_k t:1\le k<M\}$ are both left KL cells of $W$ and their union
  forms the subregular cell (see Section 7 of \cite{LG}).  Thus, for
  $x,y\in C$, $x\sim_L y\inverse$ if and only if the reduced word of $x$ ends
  with the letter that the reduced word of $y$ starts with. To describe
  products of the form $t_x\cdot t_y$ in $J_C$, we may focus only on this case, because otherwise $t_x\cdot t_y=0$ by Equation \eqref{eq:J mult} and
  Part (2) of Proposition \ref{gamma and cells}. Thus, we need to consider the
  following products in the Hecke algebra.

  \begin{proposition}
    \label{dihedral KL mult}
    Suppose $x=u_k s$ and $y=s_l v$ for some $u,v\in \{s,t\}$ and
    $0<k,l<M$. For $d\in \Z$, let $\phi(d)=k+l-1-2d$.
    Then
    \[
      c_xc_y=c_{u_k s}c_{s_l {v}}=\varepsilon+
      (v+v\inverse)\sum\limits_{d=\max(k+l-M,0)}^{\min(k,l)-1}c_{u_{\phi(d)}
    v}  \]
  in $\calh$, where $\varepsilon= f\cdot c_{s_M}$ for some $f\in \cala$ if
  $M<\infty$ and $\varepsilon=0$
  otherwise.
\end{proposition}
\begin{proof}
  It is known that $c_{uv}=c_uc_v$ and $c_{u_{k+1}}=c_{u}c_{v_k}-c_{u_{k-1}}$ for any
  distinct $u,v\in S$ and $1<k<M$ (see Section 7 of \cite{LG}). The Proposition is then straightforward to
  prove by induction on $k$. We omit the details. 
\qed\end{proof}

The following is now immediate by Corollary \ref{J_C shortcut}.
\begin{corollary}
  \label{d3}
  Suppose $x={u}_k s$ and $y=s_l v$ for some $u,v\in \{s,t\}$ and $0< k,l <M$. 
  For $d\in \Z$, let $\phi(d)=k+l-1-2d$.
  Then in $J_C$, we have
  \begin{equation}
    \label{eq:summands}
    t_xt_y=t_{u_k s}t_{s_l v}=\sum\limits_{d=\max(k+l-M,0)}^{\min(k,l)-1}
    t_{u_{\phi(d)}v}.
  \end{equation}
\end{corollary}

The elements indexing the summands in Equation \eqref{eq:summands} are
determined by the following properties: their reduced words all start with the
same letter as $x$ and end
with the same letter as $y$, and their lengths can be obtained by the
following rule: consider the list $\abs{k-l}+1,
\abs{k-l}+3, \cdots, k+l-1$ of numbers of the same parity, then delete from it all numbers
$r$ with $r\ge M$ as well as their mirror images with respect to the point
$M$, i.e., delete $2M-r$.

The rule we just described is in fact well-known; it is the \emph{truncated
Clebsch--Gordan rule}. It governs the
multiplication of the basis elements of the \emph{Verlinde algebras of the
Lie group $SU(2)$}, which
appear as the Grothendieck rings of certain {fusion categories}
(see \cite{EK} and Section 4.10 of \cite{EGNO}). 
Since it will cause no confusion, we will refer to these algebras simply as
\emph{Verlinde algebras}.

\begin{definition}[Verlinde algebras, \cite{EK}]
  \label{verlinde def}
  Let $n\in \Z_{\ge 2}\cup\{\infty\}$. The \emph{$n$-th Verlinde algebra} is the
  free abelian group $\mathrm{Ver}_n= \oplus_{1\le k\le n-1} \Z
  L_k$, with multiplication defined by 
  \begin{equation}
    \label{Verlinde mult}
    L_k L_l=\sum\limits_{d=\max(k+l-n,0)}^{\min(k,l)-1}
    L_{\phi(d)}
  \end{equation}
  where $\phi(d)=k+l-1-2d$.
  We call the $\Z$-span of the elements $L_k$ where $k$ is an odd integer the
  \emph{odd part} of $\mathrm{Ver}_n$, and denote it by $\mathrm{Ver}_n^{\text
  odd}$. 
\end{definition}

Note that by the Equation \eqref{Verlinde mult}, $\mathrm{Ver}_n^{\text odd}$
is a subalgebra of
$\mathrm{Ver}_n$. Further, both $\mathrm{Ver}_n$ and $\mathrm{Ver}_n^{\text
odd}$ are unital based rings with unit $L_1$ and anti-involution $L_i\mapsto L_i$. 

  \begin{proposition}
    \label{dihedral verlinde}
    We have $J_s\cong \mathrm{Ver}_M^{\text
    odd}$ as based rings.
  \end{proposition}
  \begin{proof} 
  The set $\Gamma_s\cap\Gamma_s\inverse\se C$ consists exactly
  of the elements $s_k$ where $1\le k\le M-1$ and $k$ is odd, therefore the map
  $t_{s_k}\mapsto L_k$ is an isomorphism of based rings from $J_s$ to
  $\mathrm{Ver}_M^{\text odd}$ by equations
  \eqref{eq:summands} and \eqref{Verlinde mult}.
\qed\end{proof}

\begin{example}
  When $M=5$, we have 
  \[
    J_s\cong \mathrm{Ver}_5^{odd}\cong \Z t_{s}\oplus \Z t_{sts}
  \]
  where $t_s$ is the unit element and $t_{sts}^2=t_s$, hence both $J_s$ and
  $\mathrm{Ver}_5^{\text odd}$ are isomorphic to the \emph{Ising fusion ring} that
  arises from the Ising model of statistical mechanics. When $M=6$, 
  both $J_s$ and $\mathrm{Ver}_6^{\text odd}$ are isomorphic to
  the Grothendieck ring of the category of finite dimensional representations
  of the symmetric group $S_3$.
\end{example}

  \subsection{Dihedral Factorization}
We now prove Theorem \ref{dihedral factorization}, which is restated below. The theorem will provide a bridge between
 the products $t_x\cdot t_y\in J_C$ in a dihedral group and such products in a
  general Coxeter group. 

\begin{thm1.4}[dihedral factorization]
  Let $x$ be the reduced word of an element in $C$, and let $x_1,
  x_2,\cdots, x_l$ be the dihedral segments of $x$. Then
  \[
    t_x=t_{x_1}\cdot t_{x_2} \cdot \cdots \cdot t_{x_l}.
  \]
\end{thm1.4}

  We need to first define the term ``dihedral segments''. Since we will only be concerned with reduced words of elements  and no
  element $s\in S$ can appear consecutively in a reduced word, we make the following assumption.

\begin{assumption}
  \label{no rep}
  From now on, whenever we speak of a word in a Coxeter system,
  we assume that no simple reflection appears consecutively in the word.
\end{assumption}

\begin{definition}[Dihedral segments]
  \label{dihedral segments def}
 \noindent For any $x\in \ip{S}$, we define
  the \emph{dihedral segments} of $x$ to be the maximal contiguous subwords of
  $x$ involving two letters. 
\end{definition}
\noindent For example, suppose $S=\{1,2,3\}$ and $x=121313123$, then
$x$ has dihedral segments $x_1=121, x_2=13131, x_3=12, x_4=23$. 

We may think of
breaking a word into its dihedral segments as a ``factorization'' process. The
process can be easily reversed by taking a proper ``product'':
\begin{definition}[Glued product] 
  \label{glued product def}
  For any two words
  $x_1,x_2\in \ip{S}$
  such that $x_1$ ends with the same letter that $x_2$ starts with, say
  $x_1=\cdots st$ and $x_2=tu\cdots$, we define their \emph{glued product} to be the
  word $x_1 * x_2:=\cdots stu\cdots$ obtained by concatenating $x_1$ and
  $x_2$ then deleting one occurrence of the common letter. 
\end{definition}
\noindent The
operation $*$ is obviously associative. Furthermore, if $x_1,x_2,\cdots, x_k$
are the dihedral segments of $x$, then 
  $x=x_1*x_2*\cdots * x_k$. Theorem \ref{dihedral factorization} can be
  viewed as an algebraic counterpart of this combinatorial factorization. 
  
  We now prove  Theorem \ref{dihedral factorization}. We need the
  following well-known fact. 
  \begin{proposition}[\cite{KL}, 2.3.e]
    \label{extremal}
   Let $x,y\in W, s\in S$ be such that $x<y, sy<y, sx>x$.  
   Then $\mu(x,y)\neq 0$ if and only if $x=sy$; moreover, in this case, $\mu(x,y)=1$.
 \end{proposition}
  \begin{lemma}
    \label{truncation}
    Let $x=s_1s_2s_3\cdots s_k$ be the reduced word of an element in $C$. Let
    $x'=s_2s_3\cdots s_k$ and $x''=s_3\cdots s_k$ be the sequences obtained by
    removing the first letter and first two letters from $x$, respectively. Then in $\calh_1$,
    we have
    \[
      c_{s_1}c_{x'}=
      \begin{cases}
        c_{x''}  &\quad\text{if}\; s_1\neq s_3;\\
        c_{x}+c_{x''} & \quad\text{if}\; s_1=s_3.
      \end{cases}
    \]
  \end{lemma}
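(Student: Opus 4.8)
The plan is to compute $c_{s_1}c_{x'}$ as a genuine element of $\calh$ using the multiplication rule for the Kazhdan--Lusztig basis, and then read off its image in $\calh_1=\calh_{\ge1}/\calh_{\ge2}$, whose basis --- by Corollary \ref{a=1} --- is precisely the set of images of the $c_z$ with $z\in C$. Assume $k\ge3$ (the cases $k\le2$, where $s_3$ is not even defined, are degenerate and can be checked directly at the end). Since $x=s_1s_2\cdots s_k$ is reduced, so is $x'=s_2\cdots s_k$, and $s_1x'=x$ with $\ell(x)=\ell(x')+1$; hence $s_1x'>x'$, and the second clause of Proposition \ref{KL basis mult} gives, inside $\calh$,
\[
  c_{s_1}c_{x'}\;=\;c_x+\sum_{z\,:\,s_1z<z<x'}\mu(z,x')\,c_z .
\]
By the Subregular Criterion (Proposition \ref{subregular criterion}), the contiguous subwords $x'$ and $x''=s_3\cdots s_k$ of $x$ are again reduced words of elements of $C$. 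Reducing modulo $\calh_{\ge2}$ kills every $c_z$ with $z\notin C$ (those have $\mathbf{a}(z)\ge2$), keeps $c_x$ since $x\in C$, and involves no $c_e$ term since $s_1e>e$. So in $\calh_1$,
\[
  c_{s_1}c_{x'}\;=\;c_x+\sum_{z\in C\,:\,s_1z<z<x'}\mu(z,x')\,c_z .
\]

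It remains to identify the surviving summands. Since $x'\in C$, its unique reduced word begins with $s_2$, so $s_2x'<x'$; I would split the sum according to whether or not $s_2\in\mathcal{L}(z)$. If $s_2\notin\mathcal{L}(z)$, i.e.\ $s_2z>z$, then Proposition \ref{extremal} applied to the pair $(z,x')$ with the reflection $s_2$ forces $z=s_2x'=x''$ and $\mu(z,x')=1$; and this $z$ actually lies in the sum iff $s_1z<z$, i.e.\ iff $s_1\in\mathcal{L}(x'')$, which --- as $x''\in C$ has the single reduced word $s_3\cdots s_k$ --- happens precisely when $s_1=s_3$. If instead $s_2\in\mathcal{L}(z)$, then a contribution would also require $s_1z<z$, so $\{s_1,s_2\}\subseteq\mathcal{L}(z)$; but an element of $C$ has a singleton left descent set (unique reduced word plus Proposition \ref{descent}) and $s_1\ne s_2$ because $x$ is reduced, so no such $z$ exists. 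Therefore the correction sum equals $c_{x''}$ when $s_1=s_3$ and is empty otherwise, and since $c_x$ is present in both cases the image of $c_{s_1}c_{x'}$ in $\calh_1$ is $c_x$ if $s_1\ne s_3$ and $c_x+c_{x''}$ if $s_1=s_3$ (so the first alternative should read $c_x$ rather than $c_{x''}$). For $k=2$ one has $c_{s_1}c_{s_2}=c_x$ already in $\calh$, the correction sum being empty, and $k=1$ is vacuous.

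The only genuinely delicate step is this control of the correction sum $\sum_z\mu(z,x')c_z$: one must show that, among all $z$ with $s_1z<z<x'$ and $\mu(z,x')\ne0$, the lone survivor in $\calh_1$ is $x''$, and only when $s_1=s_3$. This is exactly where the defining feature of the subregular cell --- uniqueness of reduced expressions --- enters twice: once to force left descent sets of elements of $C$ to be singletons (killing the $s_2\in\mathcal{L}(z)$ terms), and once to make the hypotheses of Proposition \ref{extremal} available via the fact that $x'$ begins with $s_2$. I expect the descent-set bookkeeping to be the subtle point; everything else is a direct appeal to the multiplication rule of Proposition \ref{KL basis mult} and the $\mathbf{a}$-function characterization of $C$.
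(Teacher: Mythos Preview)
Your argument is correct and follows essentially the same route as the paper's proof: apply Proposition~\ref{KL basis mult}, pass to $\calh_1$ so only $z\in C$ survive, observe that any surviving $z$ has $\mathcal{L}(z)=\{s_1\}$ (hence $s_2z>z$), and invoke Proposition~\ref{extremal} to pin down $z=s_2x'=x''$ with $\mu=1$, this term appearing precisely when $s_1=s_3$. You are also right that the first alternative in the statement is a typo and should read $c_x$ rather than $c_{x''}$; the paper's own proof and its subsequent applications of the lemma (in the proof of Theorem~F) confirm this.
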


  \begin{proof}
    By Proposition \ref{KL basis mult} and Corollary \ref{a=1}, in $\calh_1$ we have
    \begin{equation*}
      c_{s}c_{x'}=c_x+\sum_{P}\mu_{z,x'}c_z
    \end{equation*}
    where $P=\{z\in C:s_1z<z<x'\}$. Let $z\in P$. Then by Proposition \ref{descent},
    the unique reduced word of $z'$ starts with $s_1$ and hence $s_2z<z$. Since
    $z<x',s_2x'<x'$ and $s_2z>z$, Proposition \ref{extremal} implies
    that $\mu_{z,x'}\neq 0$ if and only if $z=s_2x'=x''$ and that
    $\mu(z,x')=1$ in this case. The lemma now follows.  \qed\end{proof}

  \begin{proof1.4}
    We use induction on $l$. The base case where $l=1$ is trivially true. If
    $l>1$, let $y=x_2* x_3* \cdots* x_l$ so that by induction, it suffices to
    show
    \begin{equation}
      \label{eq:w1}
      t_x=t_{x_1}\cdot t_y.
    \end{equation}
By definition of dihedral segments, $x_1$ must be of the form
$\cdots st$ for some $s,t\in S$ and the start of $x_2$, hence $y$, must be of
the form $tu\cdots$ where $u\in S\setminus\{s,t\}$.

Recall the notations $s_k,t_k$ from Section \ref{sec:dihedral case}. Also
recall from the proof of Proposition \ref{dihedral KL mult} that when
$u,v$ are distinct elements of $\{s,t\}$, then we have $c_{uv}=c_uc_v$ and
$c_{u_{k+1}}=c_uc_{v_k}-c_{u_{k-1}}$. This allows us to prove Equation \eqref{eq:w1} by induction on the length $k=l(x_1)$ of
    $x_1$. First, if $k=2$, then Lemma
    \ref{truncation} implies that
    \[
      c_{x_1}c_{y}=c_{st}c_{tu\cdots}=c_sc_tc_{tu\cdots}=(v+v\inverse)c_{stu\cdots}=(v+v\inverse)c_{x_1*
      y}
    \]
    in $\calh_1$. This in turn implies Equation \eqref{eq:w1} by Corollary \ref{J_C shortcut}.
    Second, suppose $k>2$, write $x_1=s_1s_2s_3\cdots s_k$, and let $x_1'=s_2s_3\cdots
    s_k$ and
    $x_1''=s_3\cdots s_k$. Then Lemma
    \ref{truncation} implies that
    \[
      c_{s_1s_2}\cdot
      c_{x_1'}=c_{s_1}c_{s_2}c_{x_1'}=(v+v\inverse)c_{s_1}c_{x_1'}=(v+v\inverse)(c_{x_1}+c_{x_1''})
    \]
    and similarly
    \[
      c_{s_1s_2}\cdot
      c_{x_1'* y}=(v+v\inverse)(c_{x_1* y}+c_{x_1''* y}).
    \]
    The last two equations imply that
    \begin{eqnarray*}
      t_{s_1s_2}t_{x_1'}&=& t_{x_1}+t_{x_1''}\;,\\
      t_{s_1s_2}t_{x_1'\cdot y}&=& t_{x_1\cdot
      y}+t_{x_1''\cdot y}\;
    \end{eqnarray*}
    by Corollary \ref{J_C shortcut},
    therefore
    \[
      t_{x_1}t_y=(t_{s_1s_2}t_{x_1'}-t_{x_1''})t_y=t_{s_1s_2}t_{x_1'\cdot
      y}-t_{x_1''* y}=t_{x_1* y}+t_{x_1''* y}-t_{x_1''*
      y}=t_{x_1* y}=t_x.
    \]
   Here, the second equality holds by the inductive hypothesis because
    $l(x_1')<l(x_1)$. This completes our proof.
  \qed\end{proof1.4}

  \subsection{Products in $J_C$}
  \label{sec:product computation}
  Let $W$ be an arbitrary Coxeter system and let $C$ be its subregular cell.
 Equipped with the knowledge of $J_C$ in the dihedral case
 from Corollary \ref{d3} and with the reduction to the dihedral case
provided by Theorem \ref{dihedral factorization}, we are ready to
compute any product of the form $t_x\cdot t_y$ in $J_C$. We start with a simple case.
 \begin{proposition}
   \label{d1}
    Let $x,y\in C$. If the last letter of $x$ does not equal the first letter
    of $y$, then $t_xt_y=0$.
 \end{proposition}
 \begin{remark}
   Here we identify $x$ and $y$ with their reduced words. For example, by
   ``the last letter of $x$'' we mean the last letter of the unique reduced word of
   $x$. Since there is no ambiguity, we shall do so for all elements of
   $C$ from now on.
 \end{remark}
 \begin{proof}
   The assumptions imply that $x\not\sim_L y$ by Proposition
\ref{subregular}, therefore $t_xt_y=0$ by Part (2) of Proposition
\ref{gamma and cells}.
 \qed\end{proof}
 
  \begin{proposition}
    \label{d2}
    Let $x,y \in C$. Suppose the last letter of $x$ equals the first letter
    of $y$ but the last dihedral segment of $x$ and the first dihedral segment
    of $y$ involve different sets of letters. Then $t_xt_y=t_{x * y}$. 
  \end{proposition}
  \begin{proof}
    Let $x_1,\cdots, x_p$ and $y_1,\cdots, y_q$ be the dihedral segments of
    $x$ and $y$, respectively. By the assumptions, $x_1,\cdots, x_k, y_1,\cdots,
    y_l$ are exactly the dihedral segments of the glued product $x*y$, therefore
    Theorem \ref{dihedral factorization} implies 
    $
      t_xt_y=t_{x_1}\cdots t_{x_p} t_{y_1}\cdots t_{y_q}=t_{x_1 * \cdots
      * x_p * y_1 * \cdots * y_q}=t_{x * y}.$
\qed
\end{proof}

 It remains to consider the case where $x$ ends with the letter $y$ starts with
 and the last dihedral segment of $x$, say $x_p$, involves the same two letters
 as the the first dihedral segment, say $y_1$, of $y$. To compute $t_xt_y$ in
 this case, we need to use Theorem \ref{dihedral factorization} to 
 factor $t_x$ and $t_y$ and then compute $t_{x_p}\cdot t_{y_1}$ by Corollary
 \ref{d3}. Repeated use of this idea and Proposition \ref{d2} will eventually
 allow us to express $t_xt_y$ as a linear combination of $t_z (z\in C)$. We
 illustrate this below.

\begin{example}
  \label{iterated example}
  Suppose $S=\{1,2,3\}$, $m(1,2)=4, m(1,3)=5$ and $m(2,3)=6$.
  \begin{enumerate}[leftmargin=2em]
    \item Let $x=123, y=323213$. Then by Theorem \ref{dihedral
      factorization}
      and  Proposition \ref{d3}, 
      \begin{eqnarray*}
        t_xt_y&=& t_{12}t_{23}t_{3232}t_{21}t_{13}\\
        &=& t_{12}(t_{232}+t_{23232})t_{21}t_{13}\\
        &=& t_{12}t_{232}t_{21}t_{13}+t_{12}t_{23232}t_{21}t_{13}.
      \end{eqnarray*}
      Applying Theorem \ref{dihedral factorization} again to the last expression, we
      have
      \[
        t_{x}t_y=t_{123213}+t_{12323213}.
      \]
    \item Let $x=123, y=3213$. Repeated use of Theorem \ref{dihedral
      factorization} and
      Proposition \ref{d3} yields
      \begin{eqnarray*}
        t_xt_y&=& t_{12}t_{23}t_{32}t_{21}t_{13}\\
        &=& t_{12}(t_{2}+t_{232})t_{21}t_{13}\\
        &=& (t_{12}t_2) t_{21}t_{13}+t_{12}t_{232}t_{21}t_{13}\\
        &=& (t_{12}t_{21})t_{13}+t_{12}t_{232}t_{21}t_{13}\\
        &=& (t_{1}+t_{121})t_{13}+t_{12}t_{232}t_{21}t_{13}\\
        &=& t_1t_{13}+t_{121}t_{13}+t_{12}t_{232}t_{21}t_{13}\\
        &=& t_{13}+t_{1213}+t_{123213}.
      \end{eqnarray*}
      Note here that since $t_{12}t_2=t_{12}$ on the third line, the appearance
      of $t_2$ on the second line essentially means that after using
      Proposition \ref{d3} for the generators $2$ and 3 to obtain the second
      equality, we need to use the proposition again, now for the generators 1
      and 2, to carry on the computation.
  \end{enumerate}
\end{example}

We have now described how to compute $t_xt_y$ in all cases. Some Sage
(\cite{sagemath}) code implementing the computation is available at \cite{mycode}.

\subsection{Computation of $J_C$}
Now that we know how to compute the products of any two basis elements in
$J_C$, we wish to be able to efficiently enumerate all the basis elements. We
design a graph to achieve this now.

Keep Assumption  \ref{no rep}. Then by Proposition \ref{Matsumoto}, a word
is the reduced word of an element in $C$ if and only if none of its dihedral
segments is an $(s,t)$-braid for some distinct $s,t\in S$. In other words, to
write down the reduced word of an element in $C$, we only need to make sure not
to produce a dihedral segment that is ``too long'' in the process. This motivates the
following definition.
\begin{definition}[Subregular graph]
  \label{subregular graph}
  Let $H, T:S^*\setminus\{\emptyset\}\ra S$ be the functions that send any
  nonempty word $w=s_1s_2\cdots s_k$ to its first letter $s_1$ and last letter
  $s_k$, respectively.  For $s,t\in S$ and $k\in \Z_{\ge 1}$, let $(s,t)_k$ be
  the alternating word $sts\cdots$ of length $k$. Let $D=(V,E)$ be the directed graph such that
  \begin{enumerate} 
    \item $V=\{(s,t)_k: 
      s,t\in S, 0< k <m(s,t)\}$, 
    \item $E$ consists of directed edges $(v,w)$ pointing
      from $v$ to $w$, where 
      \begin{enumerate} 
        \item 
          either $v=(s,t)_{k-1}$ and $w=(s,t)_k$ for some $s,t\in S$, $1<k<
          m(s,t)$,
        \item or $v$ and $w$ are alternating words containing different sets of
          letters, with
          $T(v)=H(w)$.
      \end{enumerate} 
  \end{enumerate} 
  We call the graph $D$ the
  \emph{subregular graph} of $(W,S)$.  \end{definition}

Recall that a \emph{walk} on a directed graph is a sequence of vertices
$(v_1,v_2,\cdots, v_k)$ such that $(v_i,v_{i+1})$ is an edge for all $1\le i\le
k-1$. It is easy to check that walks on $D$ correspond bijectively to elements
of $C$ via the map $(v_1,\cdots ,v_k)\mapsto x=T(v_1)\cdots T(v_k)$, with the
vertices in the walk keeping track of the dihedral segments of $x$ as we write
down $x$ from left to right. We leave this as an exercise.

For any
$s\in S$, the elements of
$\Gamma_{s}\cap\Gamma_{s}\inverse$ correspond to
walks on the subregular graph $D$ that start at the vertex $v=(s)$
(an alternating word of length 1) and
end at a vertex $w$ with $T(w)=s$. We will call such a walk an
\emph{$s$-walk}. The collection of all $s$-walks often involve
only a proper subset $V'$ of the vertex set $V$ of $D$. We denote the subgraph of
$D$ induced by $V'$ by $D_s$ and call it the \emph{subregular $s$-graph}. We
remark that the construction of $D$ and $D_s (s\in S)$ is similar to that of
the graphs $\Gamma_s (s\in S)$ in Section 3.7 of \cite{subregular}.

An interesting feature of $D$ and $D_s (s\in S)$ is that when $m(s,t)<\infty$
for all $s,t\in S$, the vertex
  sets of $D$ and $D_s$ ($s\in S$) are finite, hence $D$ and $D_s$
  can be viewed as \emph{finite state automata} that recognize $C$ and
  $\Gamma_s\cap\Gamma_s\inverse$, respectively, in the sense of formal
  languages (see \cite{automata}).  

\begin{example}
  \label{odd example}
  Let $(W,S)$ be the Coxeter system whose Coxeter diagram is the triangle in
  Figure \ref{fig:1}. Then $D_1$ is the directed graph
  on the right, and elements of $\Gamma_1\cap\Gamma_1\inverse$ correspond to walks on
  $D_1$ that start with the top vertex and end with either the bottom-left or
  bottom-right vertex. 

  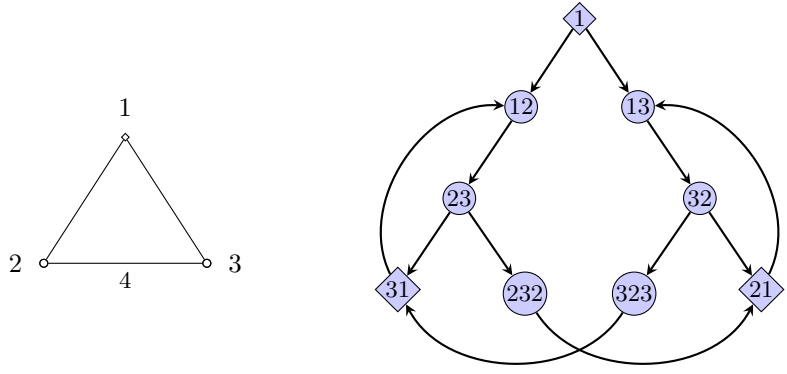
\begin{figure}[h!]
    \label{fig:1}
    \begin{center}
      \begin{minipage}{0.4\textwidth}
        \begin{tikzpicture}
          \node (00) {};
          \node (0) [right=1cm of 00] {};
          \node[main node] [diamond] (1) [right = 1cm of 0] {};
          \node[main node] (2) [below left = 1.6cm and 1cm of 1] {};
          \node[main node] (3) [below right = 1.6cm and 1cm of 1] {};
          \node (111) [above=0.1cm of 1] {$1$};
          \node[main node] (2) [below left = 1.6cm and 1cm of 1] {};
          \node (222) [left=0.1cm of 2] {$2$};
          \node[main node] (3) [below right = 1.6cm and 1cm of 1] {};
          \node (333) [right=0.1cm of 3] {$3$};
          \path[draw]
          (1) edge node {} (2)
          edge node {} (3)
          (2) edge node [below] {\small{4}} (3);
        \end{tikzpicture}
      \end{minipage}%
      \begin{minipage}{0.7\textwidth}
        \begin{tikzpicture}
          \node (000) {};
          \node[state node] [diamond] (4) [right=3cm of 000] {\small{$1$}}; 
          \node[state node] (5) [below left = 0.9cm and 0.5cm of 4]
          {\small{$12$}};
          \node[state node] (6) [below right = 0.9cm and 0.5cm of 4]
          {\small{$13$}};
          \node[state node] (7) [below left = 0.9cm and 0.5cm of 5]
          {\small{$23$}};
          \node[state node] (8) [below right = 0.9cm and 0.5cm of 6]
          {\small{$32$}};
          \node[state node] [diamond] (9) [below left = 0.9cm and 0.5cm of 7]
          {\small{31}};
          \node[state node] (10) [below right = 0.9cm and 0.5cm of 7]
          {\small{232}};
          \node[state node] (11) [below left = 0.9cm and 0.5cm of 8]
          {\small{323}};
          \node[state node] [diamond] (12) [below right = 0.9cm and 0.5cm of 8]
          {\small{21}};

          \path[draw,-stealth,,thick]
          (4) -- (5);

          \path[draw,-stealth,,thick]
          (4) -- (6); 

          \path[draw,-stealth,,thick]
          (5) -- (7); 

          \path[draw,-stealth,,thick]
          (6) -- (8);

          \path[draw,-stealth,,thick] 
          (7) -- (9); 

          \path[draw,-stealth,,thick]
          (7) -- (10);

          \path[draw,-stealth,,thick]
          (8) -- (11);

          \path[draw,-stealth,,thick]
          (8) -- (12);

          \path[draw,thick]
          (9) edge [bend left=60,-stealth,thick] (5);
          \path[draw,semithick]
          (12) edge [bend right=60,-stealth,thick] (6);
          \path[draw,semithick]
          (10) edge [bend right=60,-stealth,thick] (12);
          \path[draw,semithick]
          (11) edge [bend left=60,-stealth,thick] (9);
        \end{tikzpicture}
      \end{minipage}
    \end{center}
    \caption{The Coxeter diagram and subregular $1$-graph of $(W,S)$}
  \end{figure}
\end{example}

By using the subregular graph to keep track of the basis elements
and using Section \ref{sec:product computation} to compute their products, we may now
work out the entire multiplication tables of $J_C$ and $J_s$. We include an
example below. 

\begin{example}
  \label{counter example}
Let $(W,S)$ be the Coxeter system with the
  following diagram. We will compute the algebra $J_1$.
  \begin{figure}[h!]
    \centering
    \begin{tikzpicture}
      \node[main node] (1) {};
      \node (11) [below=0.1cm of 1] {\small{1}};
      \node[main node] (2) [right=1.5cm of 1] {};
\node (22) [below=0.1cm of 2] {\small{2}};
      \node[main node] (3) [right=1.5cm of 2] {}; 
      \node (33) [below=0.1cm of 3] {\small{3}};
      \path[draw]
      (1) edge node [above] {\small{4}} (2)
      (2) edge node [above] {\small{4}} (3);
    \end{tikzpicture}
    \caption{The Coxeter diagram of $(W,S)$}
  \end{figure}
\end{example}

The graph $D_1$ is shown in Figure \ref{fig:D1}. Let $x={121}$, $y=12321$, and let $y_n$ denote the glued product
$y*y*\cdots *y$ of $n$ copies of $y$ for each $n\in \Z_{\ge 1}$. Then by using $D_1$,
it is easy to see that $\Gamma_1\cap\Gamma_1\inverse$ consists exactly of 1,
$x$ and all $y_n$ where
$n\ge 1$, hence the basis elements of $J_1$ are $t_1,t_x$ and
$t_n:=t_{y_n}$ where $n\ge 1$.

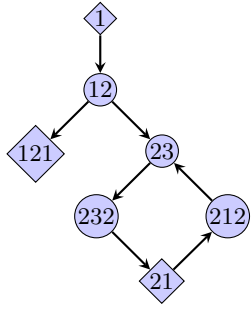
\begin{figure}[h!]
  \centering
  \begin{tikzpicture}
          \node[state node] [diamond] (1) {\small{$1$}}; 
          \node[state node] (2) [below = 0.5cm of 1]
          {\small{$12$}};
          \node[state node] [diamond] (3) [below left = 0.5cm and 0.5cm of 2]
          {\small{$121$}};
          \node[state node] (4) [below right = 0.5cm and 0.5cm of 2]
          {\small{$23$}};
          \node[state node] (5) [below left = 0.5cm and 0.5cm of 4]
          {\small{$232$}};
          \node[state node] [diamond] (6) [below right = 0.5cm and 0.5cm of 5]
          {\small{21}};
          \node[state node] (7) [below right = 0.5cm and 0.5cm of 4]
          {\small{212}};

          \path[draw,-stealth,,thick]
          (1) -- (2);

          \path[draw,-stealth,,thick]
          (2) -- (3); 

          \path[draw,-stealth,,thick]
          (2) -- (4); 

          \path[draw,-stealth,,thick]
          (4) -- (5);

          \path[draw,-stealth,,thick] 
          (5) -- (6); 

          \path[draw,-stealth,,thick]
          (6) -- (7);

          \path[draw,-stealth,,thick]
          (7) -- (4);
  \end{tikzpicture}
  \caption{The subregular $1$-graph of $(W,S)$}
  \label{fig:D1}
\end{figure}

Recall that $t_1t_w=t_w=t_wt_1$ as $t_1$ is the unit of $J_1$. Next, note that propositions
\ref{d2} and \ref{d3} imply that $t_xt_x=t_{121}t_{121}=t_1$,
while
\begin{equation}
  \label{eq:check n}
  t_{x}t_{n}=t_{121}t_{12321\cdots}=t_{121}t_{12}t_{2321*y_{n-1}}=t_{12}t_{2321*y_{n-1}}=t_{n}
\end{equation}
where we set $y_0=1$. Similarly, $t_nt_x=t_n$ for all $n\ge 1$. Finally, if we
set $t_0=t_1+t_x$, then by computations similar to those in Equation
\eqref{eq:check n}, we can check that $t_1t_n=t_{n-1}+t_{n+1}$ for all $n\ge
1$. By induction, we can then show that
\begin{equation}
  \label{eq:44}
  t_mt_n=t_{\abs{m-n}}+t_{m+n}
\end{equation}
for all $m,n\ge 1$. We have now computed all products $t_xt_y$ where $x,y\in
\Gamma_{1}\cap\Gamma_s\inverse$. To summarize, $J_1$ has the following multiplication
table, where the entry in the row labeled by $A$ and the column labeled by
$B$ has value $AB$. Note that $J_1$ is in fact commutative, which is not
obvious from its definition.
\begin{table}[h!]
\caption{The multiplication table of $J_1$}
\begin{center}
\begin{tabular}[h!]{c|ccc}
 & $t_1$ & $t_x$ & $t_n$\\
 \hline
 $t_1$ & $t_1$ & $t_x$ & $t_n$\\
 $t_x$ & $t_x$ & $t_1$ & $t_n$\\
 $t_m$ & $t_m$ & $t_m$ & $t_{\abs{m-n}}+t_{m+n}$
\end{tabular}
\end{center}
\vspace{1em}
\end{table}

\section{$J_C$ and the Coxeter diagram} 
\label{sec:diagram}
Let $(W,S)$ be an arbitrary Coxeter
system, let $G$ be its Coxeter diagram, and let $J_C$ be its subregular $J$-ring.
We study the relationship between $J_C$ and $G$ in
this section.

\subsection{Simply-laced Coxeter systems} 
\label{sec:simply-laced}
We assume $(W,S)$ is simply-laced and prove Theorem \ref{simply-laced} in this subsection. Recall
that this means $m(s,t)=3$ for all $s,t\in S$ that are adjacent in $G$. We may hence think of $G$ as an unweighted undirected graph.

As in the directed case, we define a \emph{walk} on $G$ is to be a sequence $P=(v_1,\cdots ,v_k)$ of vertices in $G$
such that $\{v_i,v_{i+1}\}$ is an edge for all $1\le i\le
k-1$. We define a \emph{spur} on
$G$ to be a walk of the form $(v,v',v)$ where $\{v,v'\}$ forms an edge. 
The following observation is key to this subsection.

\begin{proposition}
  \label{simply-laced bijection}
  For each $x\in C$, let
  $P_x:=(s_1,\cdots,s_k)$ where $s_1\cdots s_k$ is the reduced word of $x$. Then the map $x\mapsto P_x$ is a bijection between
  $C$ and the set of walks on $G$ with no spurs.
\end{proposition}
\begin{proof}
We first show that for any $x\in C$, $P_x$ is a walk on $G$ with no spurs. Since
$x$ has a unique reduced word, $x_i$ and $x_{i+1}$ cannot commute, therefore
$m(x_i,x_{i+1})=3$ for all $1\le i\le k-1$ and $P_x$ is a walk.
The fact that $m(x_i,x_{i+1})=3$ means that $x$ cannot contain the
$(x_i,x_{i+1})$-braid $(x_i,x_{i+1},x_i)$, therefore $P_x$ has no spur.

Conversely, if $(s_1,\cdots, s_k)$ is a walk on $G$ with no spur, then the word
$w=s_1\cdots s_k$ must be the reduced word of an element in $C$. Indeed, in this
case the
dihedral segments of $w$ are exactly $x_ix_{i+1}$ for $1\le i\le
k-1$, and $m(s_i,s_{i+1})=3$ for each $i$. This implies that $w$ contains no
$(s,t)$-braid for any $s,t\in S$, therefore $w$ is the reduced word of an
element in $C$ by Proposition \ref{Matsumoto}. 

The previous paragraph implies that the map $x\mapsto P_x$ is surjective. Since it is
clearly injective, it is a bijection.
\qed\end{proof}

Before we prove Theorem \ref{simply-laced}, let us define the
fundamental groupoid of $G$. Recall that 
for any topological space
$X$ and a subset $A$ of $X$, the \emph{fundamental groupoid of
$X$ based on $A$} is defined to be $\Pi(X,A):=(\mathcal{P},\circ)$, where
$\mathcal{P}$ are the homotopy equivalence classes of paths on $X$ that
connect points in $A$ and $\circ$ is concatenation of paths. Now, we may view $G$ as embedded in a topological
surface and hence as a topological space with the subspace topology. This allows us to define the \emph{fundamental groupoid of $G$} to be
$\Pi(G):=\Pi(G,S)=(\mathcal{P},\circ)$, where $\mathcal{P}$ stands for paths on
$G$. 

Clearly, paths on $G$ are just walks,
and concatenation of paths correspond to concatenation of walks. Here, more precisely, for any two walks $P=(v_1,\cdots,v_{k-1}, v_k)$ and $Q=(u_1, u_2\cdots
u_l)$ on $G$, by their \emph{concatenation} we mean the 
walk $P\circ Q = (v_1,\cdots, v_{k-1}, v_k, u_2,\cdots, u_l)$ if $v_k=u_1$;
otherwise we leave $P\circ Q$ undefined. 

We now elaborate on the notion of homotopy equivalence of walks on $G$. Given any walk on $G$ containing a spur, i.e., a walk of the form $P_1=(\cdots, u, v,v',v,
u', \cdots)$, we
may {remove the spur} to form a new walk $P_2=(\cdots, u, v, u', \cdots)$;
conversely, we can insert a spur $(v,v',v)$ to a walk of the form
$P_2$ to obtain the walk $P_1$. Note that two walks on $G$ are
homotopy equivalent if and only if they can be obtained from each other by
a sequence of removals or insertion of spurs. Further, each homotopy
equivalence class of walks contains a unique walk with no spurs.  Consequently,
let $[P]$ denote the homotopy equivalence class of a walk $P$, then
the bijection $x\mapsto P_x$ from Proposition \ref{simply-laced bijection}
induces a unique $\Z$-module
isomorphism $\Phi: J_C \ra \Z\Pi(G)$ defined by   
\begin{equation}
  \label{vs iso}
  \Phi(t_{x})=[P_x],\qquad
  \forall x\in C.
\end{equation}
We will show that $\Phi$ is actually an isomorphism of based rings 
in Theorem \ref{simply-laced}.

For each vertex $s$ in $G$, we define the \emph{fundamental
group of $G$ based at $s$} to be $\Pi_s(G)=(\mathcal{P}_s,\circ)$, where
$\mathcal{P}_s$ are now equivalence classes of walks on $G$ that start and end
with $s$, and $\circ$ is concatenation as before. Of course, $\Pi_s(G)$ is
actually a group, so it makes
sense to talk about the its group algebra $\Z\Pi_s(G)$ over $\Z$. We
mimic the construction of a group algebra to define a natural counterpart of $\Z\Pi_s(G)$ for $\Pi(G)$:

\begin{definition}
  \label{groupoid mult}
  Let $\Pi(G)=(\mathcal{P},\circ)$ be the fundamental groupoid of $G$.
  We define the \emph{groupoid algebra} of $\Pi({G})$ over $\Z$ to be the
  free abelian group $\Z\mathcal{P}=\oplus_{[P]\in \mathcal{P}} \Z[P]$ equipped with an
  $\Z$-bilinear multiplication $\cdot$ where
  \[
    [P]\cdot [Q]= 
    \begin{cases}
      [P\circ Q]\quad&\text{if}\;P\circ Q \;\text{is defined in $G$},\;
      \\
      0\quad&\text{if}\; P\circ Q\;\text{is not defined}.
    \end{cases}
  \]
\end{definition}
\noindent Note that $\Z\Pi(G)$ is clearly associative. 

The rings $\Z\Pi(G)$ and $\Z\Pi_s(G)$ are naturally based rings in the
following way:
\begin{proposition}
  \label{groupoid base}
Define $P_s$ to be the constant walk $(s)$ for
  all $s\in S$.
  For each walk $P=(v_1,v_2,\cdots,v_k)$ on $G$, define
  $P\inverse=(v_k,\cdots,v_2,v_1)$.  Then 
  \begin{enumerate}
    \item  the groupoid algebra $\Z\Pi(G)$ is a based ring with basis
  $\{[P]\}_{[P]\in \mathcal{P}}$, with unit $\sum_{s\in S} [P_s]$ (so the
  distinguished index set simply corresponds to $S$), and with its
  anti-involution induced by the map $[P]\mapsto [P\inverse]$;

\item for each
  $s\in S$, the group algebra $\Z\Pi_s(G)$ is a unital
  based ring with basis $\{[P]\}_{[P]\in \mathcal{P}_s}$, with unit $[P_s]$ 
  (so the distinguished index set is simply $\{(s)\}$), and with its
  anti-involution induced by the map $[P]\mapsto [P\inverse]$.
  \end{enumerate}
\end{proposition}
\begin{proof}
  All the claims are easy to check using definitions.
\qed\end{proof}

We are ready to prove Theorem \ref{simply-laced}, which is restated in
abbreviated form below.
\begin{thm1.1}
We have $J_C\cong \Z\Pi(G)$ as based rings, and
  $J_s\cong \Z\Pi_s(G)$ as based rings for all $s\in S$.  
\end{thm1.1}

\begin{proof}
  We show that the $\Z$-module isomorphism $\Phi: J_C\ra \Z\Pi(G)$ defined by
  Equation \ref{vs iso} is an algebra
  homomorphism. This would imply $J_s\cong \Z\Pi_s(G)$ for all $s\in S$, since
  $\Phi$ clearly restricts to a $\Z$-module map from $J_s$ to $\Z \Pi_s(G)$.
  The fact that $\Phi$ and the restrictions are actually isomorphisms of based
  rings will then be clear once we compare the based ring
  structure of $J_C, \Z\Pi(G), J_s$ and $\Z\Pi_s(G)$ described in Corollary
  \ref{subregular base} and
  Proposition \ref{groupoid base}.

  To show $\Phi$ is an algebra homomorphism, we need to show
  \begin{equation}
    \label{eq:groupoid hom}
    [P_x]\cdot [P_y]=\Phi(t_xt_y)
  \end{equation}
  for all $x,y\in C$. Let $s_k\cdots s_1$ and
  $u_1\cdots u_l$ be the reduced word of $x$ and $y$, respectively. If
  $s_1\neq u_1$, then Equation \eqref{eq:groupoid hom} holds since both sides are
  zero by Definition \ref{groupoid mult} and Proposition \ref{d1}. If
  $s_1=u_1$, let $q\le \min(k,l)$ be the largest integer such that $s_i=u_i$
  for all $1\le i\le q$. Then
  \begin{eqnarray*}
    [P_x]\cdot [P_y]&=& [(s_k,\cdots, s_{q+1}, s_q, \cdots, s_1)\circ (s_1,\cdots,
    s_q,u_{q+1},\cdots, u_l)]\\
    &=& [(s_k,\cdots, s_{q+1}, s_q,\cdots, s_2,s_1, s_2,\cdots,
    s_q,u_{q+1},\cdots, u_l)]\\
    &=& [(s_k,\cdots, s_{q+1},s_q,u_{q+1},\cdots, u_l)],
  \end{eqnarray*}
  where the last equality holds by successive removal of spurs of the form
  $(s_{i+1},s_i,s_{i+1})$. Meanwhile,  for each
  $1\le i\le q$, since $m(s_i,s_{i+1})=3$, Proposition \ref{d3} implies that
  \begin{equation}
    \label{eq:algebraic spur}
    t_{s_{i+1}s_{i}}t_{s_is_{i+1}}=t_{s_{i+1}}.  \end{equation}
  By calculations like those in Example \ref{iterated example}, it is then
  straightforward to check
  that 
   \begin{eqnarray*}
t_xt_y    &=& t_{s_k\cdots s_{q+1}s_qu_{q+1}\cdots s_l'}.
  \end{eqnarray*}
    By the definition of $\Phi$, this implies that
  \[
    \Phi(t_xt_y)=[(s_k,\cdots, s_{q+1},s_q, u_{q+1},\cdots, u_l)].
  \]
  and hence 
  $
  [P_x]\cdot [P_y]=\Phi(t_xt_y).
  $ 
  Our proof is now complete.
\qed\end{proof}

\subsection{Oddly-connected Coxeter systems}
\label{sec:oddly-connected def}
Define a Coxeter system
$(W,S)$ to be \emph{oddly-connected} if for all distinct $s,t\in S$, there is a
walk in the Coxeter diagram $G$ of the form
$(s=v_1,v_2,\cdots, v_k=t)$ where the edge weight $m(v_{i},v_{i+1})$ is odd for all $1\le i\le k-1$. In this subsection,
we discuss how the odd-weight edges affect the structure of the algebras
$J_C$ and $J_s$ ($s\in S$).

We need some relatively heavy notation.
\begin{definition}
  \label{transition}
  For any $s,t\in S$ such that $M=m(s,t)$ is odd, 
  \begin{enumerate}
    \item we define  
      \[
        z({st})=sts\cdots t
      \]
      to be the alternating word of length $M-1$ that starts with
      $s$ (note that $z(st)$ necessarily ends with $t$ now that $M$ is odd);
    \item we define maps $\lambda_{s}^t, \rho_t^s: J_C\ra J_C$ by
      \[
      \lambda_{s}^t(t_x)=t_{z({ts})}t_{x},\]
      \[
        \rho_s^t
        (t_x)=t_xt_{z({st})},
      \]
      and define the map $\phi_{s}^t: J_C\mapsto J_C$ by
      \[
        \phi_{s}^t(t_x)=\rho_{s}^t\circ\lambda_{s}^t (t_x)
      \]
      for all $x\in C$.
    \end{enumerate}
\end{definition}

\begin{remark}
  \label{match}
  The notation above is set up in the following way. The letters $\lambda$
  and $\rho$ indicate a map is multiplying its input by an element on the left and right, respectively. The subscripts and superscripts are to provide mnemonics for what the maps do on the reduced words indexing the basis elements of
  $J_C$: note that by Proposition \ref{subregular} and Part (3) of Proposition
  \ref{gamma and cells}, 
  $\lambda_{s}^t$ maps $J_{\Gamma_s^{-1}}$ to
  $J_{\Gamma_{t}^{-1}}$ and vanishes on $J_{\Gamma_h^{-1}}$ for any $h\in
  S\setminus\{s\}$. 
  Similarly, $\rho_{s}^t$ maps $J_{\Gamma_s}$ to
  $J_{\Gamma_{t}}$ and vanishes on $J_{\Gamma_h}$ for any $h\in S\setminus\{s\}$. 
\end{remark}

\begin{proposition}
  \label{odd edge}
  Let $s,t$ be as in Definition \ref{transition}. Then
  \begin{enumerate}
    \item 
      $\rho_s^t\circ \lambda_s^t=\lambda_s^t\circ\rho_s^t$. 
    \item $\rho_{t}^s\circ\rho_{s}^t(t_x)=t_x$ for any $x\in\Gamma_s$,
      $\lambda_t^s\circ\lambda_{s}^t(t_x)=t_x$ for any $x\in\Gamma_s^{-1}$.
    \item  
      $\rho_{s}^t (t_x)\lambda_{s}^t(t_y)=t_xt_y$ for any $x\in \Gamma_s,y\in
      \Gamma_s^{-1}$.
    \item The restriction of
      $\phi_s^t$ on $J_s$ is an isomorphism of based rings from $J_s$ to $J_t$.

  \end{enumerate}
\end{proposition}

\begin{proof}
  Part (1) holds since both sides of the equation sends $t_x$ to
  $t_{z(ts)}t_xt_{z(st)}$. Parts (2) and (3) are consequences of the truncated
  Clebsch--Gordan rule. By the rule, 
  \[
    t_{z(st)}t_{z(ts)}=t_s, 
  \]
  therefore $
  \rho_{t}^s\circ\rho_{s}^t(t_x)=t_xt_s=t_x
  $
  for any $x\in\Gamma_s$ and
  $
  \lambda_t^s\circ\lambda_{s}^t(t_x)=t_st_x
  $ 
  for any $x\in\Gamma_s^{-1}$; this proves (2). Meanwhile, $ 
  \rho_{s}^t
  (t_x)\lambda_{s}^t(t_y)=t_xt_{z(st)}t_{z(ts)}t_y=t_xt_st_y=t_xt_y
  $ for any $x\in\Gamma_s, y\in \Gamma_{s}^{-1}$; this proves (3).

  For part (4), the fact that $\phi_s^t$ maps $J_s$ to $J_t$ follows from Remark
  \ref{match}. To see that $\phi_{s}^t$ is a (unit-preserving) algebra homomorphism, note that
    \[
    \phi_s^t(t_s)=t_{z(ts)}t_{s}t_{z(st)}=t_{z(ts)}t_{z(st)}=t_t,
  \]
  and that 
  for all $t_x, t_y\in
  J_s$, 
  \[
    \phi_s^t
    (t_x)\phi_s^t(t_y)=(\rho_s^t (\lambda_s^t(t_x))\cdot
    (\lambda_s^t(\rho_s^t(t_y))=\lambda_s^t(t_x)\cdot
    \rho_s^t(t_y) 
    =\phi_{s}^t(t_xt_y)
  \]
  by parts (1) and (3).
  We can similarly check $\phi_t^s$ is an algebra homomorphism from $J_t$ to
  $J_s$. Finally, using
  calculations similar to those used for part (2), it
  is easy to check that $\phi_s^t$ and $\phi_{t}^s$ are mutual inverses , therefore $\phi_s^t$ is an algebra
  isomorphism.
  
  It remains to check that the restriction is an isomorphism of
  based rings. In light of Proposition \ref{subregular base}, this means
  checking that 
  $\phi_s^t(t_{x\inverse})=(\phi_s^t(t_x))^*$ for each $t_x\in J_s$, where ${}^*$ is the linear map
  sending $t_x$ to $t_{x\inverse}$ for each $t_x\in J_s$. This holds because 
  \[
    \phi_s^t(t_{x\inverse})=t_{z(ts)}t_{x\inverse}t_{z(st)}=(t_{z(st)\inverse}t_{x}t_{z(ts)\inverse})^*=(t_{z(ts)}t_xt_{z(st)})^*=(\phi_s^t(t_x))^*,
  \]
  where the second equality follows from the definition of ${}^*$ and the fact that $t_x\mapsto t_{x\inverse}$ defines an
  anti-homomorphism in $J$. 
\qed\end{proof}

Now we upgrade the definitions and Propositions from a single edge to a walk.
\begin{definition}
  \label{lambdas and rhos}
  For any walk $P=(u_1,\cdots, u_l)$ in $G$ where $m(u_k,u_{k+1})$ is odd
  for all $1\le k\le l-1$, we define maps $\lambda_{P},\rho_P$ by
      \[
        \lambda_P=\lambda_{u_{l-1}}^{u_l}\circ \cdots \lambda_{u_2}^{u_3}\circ
        \lambda_{u_1}^{u_2},
      \]
      \[
        \rho_P=\rho_{u_{l-1}}^{u_l}\circ \cdots \rho_{u_2}^{u_3}\circ
        \rho_{u_1}^{u_2},
      \]
      and define the map $\phi_P: J_C\ra J_C$ by 
      \[
        \phi_P=\lambda_P\circ\rho_P.
      \]
\end{definition}

\begin{proposition}
  \label{odd walk}
  Let $P=(u_1,\cdots, u_l)$ be as in Definition \ref{lambdas and rhos} Then
  \begin{enumerate}
    \item $ \phi_P=\phi_{u_{l-1}}^{u_l}\circ\cdots\circ \phi_{u_2}^{u_3}\circ
      \phi_{u_1}^{u_2}$.
    \item $\rho_{P\inverse}\circ\rho_{P}(t_x)=t_x$ for any
      $x\in\Gamma_{u_1}$,
      $\lambda_{P\inverse}\circ\lambda_P(t_x)=t_x$ for any
      $x\in\Gamma_{u_1}^{-1}$.
    \item $\rho_{P} (t_x)\lambda_P(t_y)=t_{x}t_y$ for any $x\in
      \Gamma_{u_1}, y\in \Gamma_{u_l}^{-1}$.
    \item The restriction of $\phi_P$ is an isomorphism of based rings from
      $J_{u_1}$ to $J_{u_l}$.
  \end{enumerate}
\end{proposition}
\begin{proof}
  Part (1) holds since each left multiplication $\lambda_{u_k}^{u_{k+1}}$
  commutes with all right multiplications $\rho_{u_{k'}}^{u_{k'+1}}$. Part
  (2)-(4) can be proved by writing out each of the maps as a composition of
  $(l-1)$ appropriate maps corresponding to the $(l-1)$ edges of $P$ and then repeatedly
  applying their counterparts in Proposition \ref{odd walk} on these
  component maps. In particular, (4) follows from (1) since a composition of
  isomorphisms of based rings is clearly another isomorphism of based rings.
\qed\end{proof}

We are almost ready to prove Theorem \ref{oddly-connected}: 
\begin{thm1.2}
  Let $(W,S)$ be an oddly-connected Coxeter system. Then 
  \begin{enumerate}
    \item $J_s\cong J_t$ as based rings for all $s,t\in S$. 
    \item $J_C\cong
      \mathrm{Mat}_{S\times S}(J_s)$ as based rings for all $s\in S$. In particular,
      $J_C$ is Morita equivalent to $J_s$ for all $s\in S$.
  \end{enumerate}
\end{thm1.2}
\noindent Here, for each $s\in S$, $\mathrm{Mat}_{S\times
S}(J_s)$ is the algebra of matrices with rows and columns indexed by $S$
and with entries from $J_s$. We explain its based ring structure below.

  \begin{proposition}
    \label{odd base}
For any $a,b\in S$ and $f\in J_s$, let
$E_{a,b}(f)$ be the matrix in $\mathrm{Mat}_{S\times S} (J_s)$ with $f$ at the
$a$-row, $b$-column and zeros elsewhere. Then 
  $\mathrm{Mat}_{S\times S}(J_s)$ is a based ring with basis
  $\{E_{a,b}(t_x):a,b\in S, x\in \Gamma_s\cap\Gamma_s\inverse\}$, with
  unit element $\sum_{s\in S}E_{s,s}(t_s)$, and with its anti-involution
  induced by $E_{a,b}(t_{x})^*=E_{b,a}(t_{x\inverse})$.  
\end{proposition} 

\begin{proof}
 Note that for any $a,b,c,d\in S$ and $f,g\in J_s$, 
 \begin{equation}\label{eq:matrix mult}
   E_{a,b}(f)E_{c,d}(g)=\delta_{b,c}E_{a,d}(fg).
 \end{equation}
It is then easy to check that $\mathrm{Mat}_{S\times S}(J_s)$ has unit
$\sum_{s\in S}E_{s,s}(t_s)$. Next, note that 
\[
  (E_{a,b}(f)E_{c,d}(g))^*=0=(E_{c,d}(g))^*(E_{a,b}(f))^*
\]
when $b\neq c$. When $b=c$, since $t_x\mapsto t_{x\inverse}$ is an
anti-homomorphism on $J$,
  \[
    (E_{a,b}(t_x)E_{c,d}(t_y))^*= (E_{a,d}((t_xt_y)))^*=
      E_{d,a}(t_{y\inverse}t_{x\inverse})=(E_{c,d}(t_y))^*(E_{a,b}(t_x))^*.
  \]
The last two equations imply that
  ${}^*$ induces an anti-involution of $\mathrm{Mat}_{S\times S}(J_s)$. Finally,
  note that $E_{u,u}(t_s)$ appears in
  $E_{a,b}(t_x)E_{c,d}(t_y)=\delta_{b,c}E_{a,d}(t_xt_y)$ for some $u\in S$ if and only if
  $b=c, a=d=u$ and $x=y\inverse$ (for $t_s$ appears in $t_xt_y$ if and only if
  $x=y\inverse$). This proves Equation \eqref{eq:based ring},
 completing all  
  necessary verifications. 
\qed\end{proof}

\begin{proof1.2}
  Part (1) follows from the last part of Proposition \ref{odd walk}. 
  To prove (2), fix $s\in S$. For each $t\in S$, fix a walk
  $P_{st}=(s=u_1,\cdots, u_l=t)$ and define $P_{ts}=P_{st}\inverse$. Write
  $\lambda_{st}$ for $\lambda_{P_{st}}$, and define
  $\rho_{st},\lambda_{ts},\rho_{ts}$ similarly. 
  Consider the
  unique $\Z$-module map 
  \[
    \Psi: J_C\ra \mathrm{Mat}_{S\times S}(J_s)
  \]
  defined as follows: 
  for any $t_x\in J_C$, say $x\in\Gamma_a^{-1}\cap\Gamma_b$ for $a,b\in S$, let
  \[
    \Psi(t_x)=E_{a,b}(\lambda_{{a s}}\circ \rho_{{bs}} (t_x)).
  \]
  We first show below that $\Psi$ is an algebra isomorphism. 

  Let $t_{x},t_y\in J_C$. Suppose
  $x\in\Gamma_a^{-1}\cap\Gamma_b$ and $y\in\Gamma_c^{-1}\cap\Gamma_d$ for
  $a,b,c,d\in S$. If $b\neq c$, then
  \[
    \Psi(t_x)\Psi(t_y)=0=\Psi(t_xt_y)
  \]
  by Equation \eqref{eq:matrix mult} and Proposition \ref{d1}. If $b=c$, then 
  \begin{eqnarray*}
    \Psi(t_x)\Psi(t_y)&=& E_{a,b}(\lambda_{{as}}\circ\rho_{bs}(t_x)) \cdot
    E_{c,d}(\lambda_{{cs}}\circ\rho_{ds}(t_y))\\
    &=& E_{a,d}([\lambda_{{as}}\circ\rho_{bs}(t_x)] \cdot
    [\lambda_{{bs}}\circ\rho_{ds}(t_y)])\\
    &=& E_{a,d}( (\lambda_{as}\circ\rho_{ds})  [\rho_{bs}(t_x)\cdot
    \lambda_{bs}(t_y)])\\
    &=& E_{a,d}( (\lambda_{as}\circ\rho_{ds})  [t_xt_y]) \\
    &=& \Psi(t_xt_y),
  \end{eqnarray*}
  where the second last equality holds by part (3) of Proposition \ref{odd walk}.
  It follows that $\Psi$ is an algebra homomorphism. Next, consider the map
  \[
    \Psi':\mathrm{Mat}_{S\times S}(J_s)\ra J_C
  \]
  defined by 
  \[
    \Psi'(E_{a,b}(f))=\lambda_{sa}\circ\rho_{sb}(f)
  \]
  for all $a,b\in S$ and $f\in J_s$. Using Part (2) of Proposition \ref{odd
  walk}, it is easy to check that $\Psi$ and $\Psi'$ are mutual inverses as maps
  of sets. It follows that $\Psi$ is an algebra isomorphism. Finally, it is
  easy to compare Proposition \ref{subregular base} with Proposition \ref{odd
  base} and check that $\Psi$ is indeed an isomorphism of based rings.\qed\end{proof1.2}

\begin{remark}
  The conclusions of the theorem fail in general when $(W,S)$ is not
  oddly-connected. As a counter-example, consider rings $J_1$ and
  $J_2$ arising from the Coxeter system in Example
  \ref{counter example}.  
By the truncated Clebsch--Gordan rule,
\[
  t_{212}t_{212}=t_2=t_{232}t_{232},
\]
therefore $J_2$ contains at least two basis elements with multiplicative order
2. However, it is evident from Example \ref{counter example} that $t_{121}$ is
the only basis element of order 2 in $J_1$. This implies that $J_1$ and $J_2$ are not
isomorphic as based rings. Moreover, Equation \eqref{eq:matrix mult} implies
that for any
$s\in S$, the basis elements of $\mathrm{Mat}_{S\times S}(J_s)$ of order 2 must be of the
form $E_{u,u}(t_{x})$ where $u\in S$ and $t_x$ is a basis element of order 2 in $J_s$, so $\mathrm{Mat}_{S\times S}(J_1)$ and $\mathrm{Mat}_{S\times
S}(J_2)$ have different numbers of basis elements of order 2 as well. It follows that
Part (2) of the theorem also fails.
\end{remark}

\begin{remark}
  The isomorphism between $J_s$ and $J_t$ can be easily lifted to a tensor
  equivalence between their categorifications $\mathcal{J}_s$ and
  $\mathcal{J}_t$, the subcategories of the category $\mathcal{J}$ mentioned in the introduction
  that correspond to $\Gamma_s\cap\Gamma_s^{-1}$ and
  $\Gamma_t\cap\Gamma_t^{-1}$.
\end{remark}

Let us end the section by revisiting an earlier example.

\begin{example}
  Let $(W,S)$ be the Coxeter system from Example \ref{odd example}. Clearly, $(W,S)$ is \emph{oddly-connected}, hence $J_3\cong J_2\cong
  J_1$
  and $J_C\cong \mathrm{Mat}_{3\times 3}(J_1)$ by Theorem
  \ref{oddly-connected}. Let us study $J_1$.
 Observe from $D_1$ that all walks corresponding to elements of
$\Gamma_1\cap\Gamma_1^{-1}$  can be obtained by
  concatenating the walks corresponding to the elements
  $x=1231, y=1321, z=12321$ and $w=13231$. By Section \ref{sec:product computation},
  this implies that $t_x,t_y,t_z,t_w$
  generate $J_1$. Computing the
  products of these elements reveals that $J_1$ is generated by $t_x, t_y, t_z, t_w$ subject to the following
  six relations:
  \[
    t_xt_y=1+t_z, \,t_yt_x=1+t_w,\, t_xt_w=t_x=t_zt_x,\, t_yt_z=t_y=t_wt_y, \,
    t_{w}^2=1=t_{z}^2.
  \]
  The first two of the relations show that $t_z=t_xt_y-1, t_{w}=t_yt_x-1$. We
  can then rewrite the other relations in terms of only $t_x$ and $t_y$. It
  turns out that $J_1$ is generated by
  $t_x$ and $t_y$ subject only to the following two relations: 
  \[
    t_xt_yt_x=2t_x,\, t_yt_xt_y=2t_y.
  \]
  Finally, via the change of variables $X:={t_x}/{2}, Y:=t_y$, we see that
  \[
    J_1=\langle X, Y\rangle/ \langle XYX=X, YXY=Y\rangle.
  \]
  A simple presentation like this is helpful for studying representations of
  $J_1$ and hence $J_2, J_3$ and $J_C$.
\end{example}

\subsection{Fusion $J_s$}
\label{sec:fusion J}
In this subsection, we describe all fusion rings appearing in the form $J_s$ from a Coxeter system.
Recall from Definition \ref{fusion def} that a fusion ring is a unital based
ring of finite rank, so the algebra $J_s$ is a fusion
ring if and only if $\Gamma_{s}\cap\Gamma_{s}\inverse$ is finite. It is 
easy to describe when this happens in terms of Coxeter diagrams.

\begin{proposition}
  \label{graph theory}
  Let $(W,S)$ be an irreducible Coxeter system with Coxeter diagram $G$. Then the following are
  equivalent.
  \begin{enumerate}
    \item $\Gamma_s\cap\Gamma_s^{-1}$ is finite for some $s\in S$; 
      \item  $\Gamma_s\cap\Gamma_s^{-1}$ is finite for all $s\in S$;
        \item $G$ is a tree, no edge of $G$ has weight $\infty$, and at most one
  edge of $G$ has weight greater than 3.
  \end{enumerate}   \end{proposition}

\begin{proof}
  Since $(W,S)$ is irreducible, $G$ is connected. The condition that $G$ is a
  tree is then equivalent to the condition that $G$ contains no cycle. Let
  $D$ be the subregular graph of $(W,S)$, and recall that for each $s\in S$,
  the set $\Gamma_s\cap\Gamma_{s}^{-1}$ correspond bijectively to the $s$-walks
  on $D$. The desired equivalences now follow from a straightforward graph
  theoretic argument, and we omit the details. 
\qed\end{proof}

We can now deduce Theorem
\ref{fusion J}. 
\begin{thm1.3}
  Let $(W,S)$ be a Coxeter system such that $J_s$ is a fusion
  ring for some $s\in S$. Then there exists a dihedral Coxeter system
  $(W',S')$ such that $J_s\cong
  J_{s'}$ as based rings for both $s'\in S'$.  
\end{thm1.3}
\begin{proof}
  For each $n\in \Z_{\ge 3}$, let $(W_n,S')$ be the dihedral system with
  $S'={s',t'}$ and $m(s',t')=n$, and
  let $J_{s'}^{(n)}$ be the ring $J_{s'}$ arising from $(W_n, S')$.

  Let $G$ be the Coxeter diagram of
  $(W,S)$, and suppose $J_s$ is a fusion ring for some $s\in S$. Then
  $\Gamma_s\cap\Gamma_s\inverse$ is finite, hence by Proposition \ref{graph
  theory}, either $G$ is a tree and $(W,S)$ is simply-laced, or
  $G$ is a tree and there exists a unique pair $a,b\in S$ such that
  $m(a,b)>3$. 
  
  In the first case where $(W,S)$ is simply-laced, $J_s$ is isomorphic to the group algebra of $\Pi_s(G)$ by Theorem
  \ref{simply-laced}, and the group is
  trivial since $G$ is a tree, therefore $J_s\cong J_{s'}^{(3)}$. In the second
  case, let $m(a,b)=M$. By the description of
  $G$, there must be a walk $P$ in $G$ from
  $s$ to either $a$ or $b$ where all the edges in the walk have weight 3,
  so Part (4) of Proposition \ref{odd walk} implies that $J_s$ is isomorphic to
  either $J_a$ or $J_b$ as a based ring. We claim that $\Gamma_a\cap \Gamma_{a}^{-1}$ contains exactly
  the elements $a, aba,\cdots, ab\cdots a$ where the reduced words alternate in
  $a,b$ and contain fewer than $M$ letters, so that $J_a\cong J_{s'}^{(M)}$.
  Similarly, $J_b\cong J_{s'}^{(M)}$, therefore $J_s\cong J_{s'}^{(M)}$.

  It remains to prove the claim. It is clear once we note that for each
  $x\in
  \Gamma_a\cap\Gamma_{a}^{-1}$, any spur in the walk $P_x$ from
  Proposition \ref{simply-laced bijection} must involve only $a$ and $b$.
\qed\end{proof}

\begin{corollary}
  \label{fusion verlinde}
  Let $(W,S)$ be a Coxeter system such that $J_s$ is a fusion ring for some
  $s\in S$. Then $J_s\cong \mathrm{Ver}_n^{\text odd}$ for all $s\in S$, where
  $n$ is the highest edge weight in the Coxeter diagram of $(W,S)$.
\end{corollary}
\begin{proof}
  This is immediate from Proposition \ref{dihedral verlinde} and the proof of Theorem
  \ref{fusion J}.
\qed\end{proof}

\section{Free fusion rings} 
\label{sec:free fusion rings}
We focus on certain Coxeter systems $(W,S)$ whose Coxeter
diagrams involve edges of weight $\infty$ in this section. We show that for
suitable choices of $s\in S$, $J_s$ is isomorphic to a \emph{free fusion ring}.

\subsection{Background}
\label{sec:background}
Free fusion rings are defined as follows.
\begin{definition}[\cite{Raum}]
  \label{ffr}
  A \emph{fusion set} is a set $A$ equipped with an {involution}
  \,$\bar{}: A\ra A$ and a \emph{fusion} map $\circ: A \times A \ra A\cup
  \{\varnothing\}$. Given any fusion set $(A,\,\bar{}\,,\circ)$, we extend the operations
  $\,\bar{}\,$ and $\circ$ to the free monoid $\langle A \rangle$ as follows:
  \[
    \overline{a_1\cdots a_k}=\bar a_k\cdots \bar a_1,
  \]
  \[ (a_1\cdots a_k)\circ
    (b_1\cdots b_l)=a_1\cdots a_{k-1}(a_k\circ b_1)b_2\cdots b_l,
  \] 
  where the right side of the last equation is taken to be $\varnothing$  whenever $k=0, l=0$ or
  $a_k\circ b_1=\varnothing$.
  We then define the \emph{free fusion ring} associated with the fusion set
  $(A,\,\bar{}\,,\circ)$ to be the free abelian group $R=\Z\langle
  A \rangle$ with multiplication
  $\cdot$ given by
  \begin{equation}\label{eq:ffr}
    v\cdot w=\sum_{v=xy,w=\bar yz} xz+x\circ z
  \end{equation}
  for all $v, w\in \ip{A}$, where $xz$ means the juxtaposition of $x$ and $z$.
\end{definition}
\noindent It is known that $\cdot$ is associative (see \cite{Raum}). It is
also easy to check that $R$ is a unital based ring with basis $\ip{A}$, with unit given by the empty word, and
with its anti-involution ${}^*:\ip{A}\ra\ip{A}$ given by the map $\,\bar{}\,$.
We should mention that while we have already defined fusion rings to be unital based rings of finite rank in
Definition \ref{fusion def}, the new term ``free fusion rings'' here does not
mean fusion rings with an additional property of being ``free'' in some sense.
In fact, free fusions rings are never fusion rings in the sense of Definition
\ref{fusion def} because they fail to be of finite rank.

Free fusion rings were introduced in \cite{Banica} to capture the
tensor rules in certain semisimple tensor categories arising from the theory
of operator algebras. More specifically,
the categories are categories of representations of \emph{compact quantum
groups}, and their Grothendieck rings
fit the axiomatization of free fusion rings in Definition \ref{ffr}. In
\cite{Freslon-1}, A. Freslon classified all free
fusion rings arising as the Grothendieck rings of compact quantum groups in terms of their underlying
fusion sets. Furthermore, while a free fusion ring may appear as the
Grothendieck ring of multiple non-isomorphic compact quantum groups,
Freslon described a canonical way to associate a \emph{partition
quantum group}---a special type of compact quantum group---to any free fusion
ring arising from a compact quantum group. These special quantum groups
correspond via a type
of Schur-Weyl duality to \emph{categories of non-crossing partitions}, which
can in turn be used to study the representations of the quantum
groups.

All the free fusion rings appearing as $J_s$ in our
examples fit in the classification of \cite{Freslon-1}. In each example, we will identify the associated
partition quantum group  $\mathbb{G}$. The fact that $J_s$ is connected to
$\mathbb{G}$ is intriguing, and it
would be interesting to see how the categorification of $J_s$ arising from
Soergel bimodules connects to the representations of $\mathbb{G}$ on the
categorical level.

\subsection{Example 1: ${O_N^+}$}
\label{sec:example1}
One of the simplest fusion sets is the singleton set $A=\{a\}$ with identity
as its involution and with fusion map $a\circ a=\varnothing$. The associated free fusion
ring is $R=\oplus_{n\in \Z_{\ge 0}} \Z a^n$, where
\[
  a^k\cdot a^l=a^{k+l}+a^{k+l-2}+\cdots +a^{\abs{k-l}}
\]
by Equation \ref{eq:ffr}. The partition quantum group associated to
$R$ is the \emph{free orthogonal quantum group} $O_N^+$, and its
corresponding category of partitions is that of all \emph{noncrossing
 pairings} (see \cite{orthogonal}).

Let $(W,S)$ be the infinite dihedral system with $S=\{1,2\}$ and
$W=I_2(\infty)$, the infinite dihedral group. We claim that $J_1$ is
isomorphic to $R$ as based rings. To see this, recall that $J_s$ is the $\Z$-span of basis
elements $t_{1_n}$, where $n$ is odd and $1_n=121\cdots 1$ alternates in $1,2$
and has length $n$. For $m=2k+1$ and $n=2l+1$ for some $k,l\ge 1$, the
truncated Clebsch--Gordan rule implies that \[ t_{1_m}\cdot
t_{1_n}=t_{1_{2k+1}}t_{1_{2l+1}}=t_{1_{2(k+l)+1}}+t_{1_{2(k+l-1)+1}}+\cdots
+t_{1_{2\abs{k-l}+1}}.  \] It follows that $R\cong J_1$ as based rings via the
unique $\Z$-module map with $a^k\mapsto t_{1_{2k+1}}$ for all $k\in \Z_{\ge
0}$. Similarly, $R\cong J_2$ as based rings. 

\subsection{Example 2: ${U_N^+}$}
\label{sec:example2}
Consider the free fusion ring $R$ arising from the fusion
set $A=\{a,b\}$ with $\bar a=b$ and $a\circ a=a\circ b=b \circ a=b \circ
a=\varnothing$.  The partition quantum group associated to $R$ is the \emph{free
unitary quantum group} $U_N^+$.  In the language of \cite{Freslon-1}, this
quantum group corresponds to the category of \emph{$\mathcal{A}$-colored}
noncrossing partitions where $\mathcal{A}$ is a \emph{color set} containing two
colors \emph{inverse} to each other. 

Consider the Coxeter system $(W,S)$ with the following Coxeter diagram
$G$.
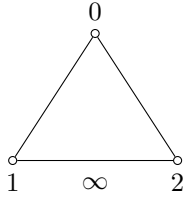
\begin{figure}[h!]
  \begin{center}
    \begin{tikzpicture}
      \node (4) {}; 
      \node[main node] (5) [above right = 0.5cm and 1.5cm of 4] {};
      \node (55) [above = 0cm of 5] {0}; 
      \node[main node] (6) [below left = 1.6cm and 1cm of 5] {};
      \node (66) [below = 0cm of 6] {1}; 
      \node[main node] (7) [below right = 1.6cm and 1cm of 5] {};
      \node (77) [below = 0cm of 7] {2}; 
      \path[draw]
      (5) edge node [left] {} (6)
      (5) edge node [right] {} (7)
      (6) edge node [below = 0.1cm] {$\infty$} (7);
    \end{tikzpicture}
  \end{center}
  \caption{The Coxeter diagram $G$ of $(W,S)$}
\end{figure}

\begin{theorem}
  \label{unitary}
  We have an isomorphism $R\cong J_0$ of based rings.
\end{theorem}

Our strategy to prove Theorem \ref{unitary} is to describe a bijection between the free
monoid $\ip{A}$ and the set
$\Gamma_0\cap \Gamma_0\inverse$, use it to define a $\Z$-module isomorphism
from $R$ to $J_0$, then show that it is an isomorphism of based rings. 

Let $x\in
\Gamma_0\cap\Gamma_0\inverse$, and let $P_x$ be the corresponding path on
$G$ as in Proposition \ref{simply-laced bijection}. Read $P_x$ from left to
right, write down an ``$a$'' every time an edge in the walk goes from $1$
to 2, a ``$b$'' every time an edge goes from $2$ to 1, and write down
nothing otherwise. Call the resulting word $w_x$. Then the map $\varphi: \Gamma_0\cap\Gamma_0\inverse\ra\ip{A}, x\mapsto w_x$ is
is a bijection. The reason is that the walks $\{P_x:x\in \Gamma_0\cap\Gamma_0\inverse\}$
are exactly the walks on $G$ without any spur involving $0$, and from each word in
$\ip{A}$ we may recover a unique such path. For example, suppose $w_x=abaa$,
then $P_x$ must be $(0,1,2,1,2,0,1,2,0)$ and hence $x=012120120$.

We can now prove Theorem \ref{unitary}. We present an inductive proof
that can be easily adapted to prove Theorem \ref{amalgamate} later.

\begin{proof6.1}
  Let $\phi:\ip{A}\ra \Gamma_0\cap\Gamma_0\inverse$ be the inverse of
  $\varphi$, and let $\Phi: R\ra J_0$ be the $\Z$-module homomorphism defined by 
  \[
    \Phi(w) = t_{\phi(w)}.
  \]
  Since $\phi$ is a bijection, this is an isomorphism of $\Z$-modules. We will
  show that $\Phi$ is an algebra isomorphism by showing that 
  \begin{equation}
    \label{eq:check Phi}
    \Phi(v)\Phi(w)=\Phi(v\cdot w)
  \end{equation}
  for all $v,w\in \ip{A}$. Note that this is true if $v$ or $w$ is empty, since then
  $t_v=t_0$ or $t_w=t_0$, which is the identity of $J_0$. 

  Now, assume neither $v$ nor $w$ is empty. We
  prove Equation \eqref{eq:check Phi} by induction on the {length}
  $l(v)$ of $v$, i.e., on the number of letters in $v$.
  For the base case, suppose $l(v)=1$ so that $v=a$ or $v=b$. If
  $v=a$, then $\phi(a)=0120$. There are two cases: 
  \begin{enumerate}[leftmargin=2em]
    \item Case 1: $w$ starts with $a$.  \\
      Then $\phi(w)$ has
      the form $\phi(w)=012\cdots$, so
      \[
        \Phi(v)\Phi(w)=t_{0120}t_{012\cdots}=t_{0120 *
        012\cdots}=t_{012012\cdots}=t_{\phi(aw)}
      \]
      by Proposition \ref{d2}. Meanwhile,
      since $\bar a\neq a$ and $a\circ a=\varnothing$ in $A$, 
      \[ 
        v\cdot w=aw
      \]
      in $R$,
      therefore 
      $
      \Phi(v\cdot w)=t_{\phi(aw)}
      $
      as well. Equation \eqref{eq:check Phi} follows.
    \item Case 2: $w$ starts with $b$. \\
      In this case, suppose the longest alternating subword $bab\cdots$ appearing in the
      beginning of $w$ has length $k$, and and write $w=bw'$.
      Then $\phi(w)$ takes the form $\phi(w)=0212\cdots$; its first
      dihedral segment is $02$ and its second dihedral segment is
      $(2,1)_{k+1}$, so that $\phi(w)=02*(2,1)_{k+1}*x$ where
      $x$ is the glued product of all the remaining dihedral segments. Direct
    computation using Theorem \ref{dihedral factorization} and Propositions
      \ref{d2} and \ref{d3} then yields
      \begin{eqnarray*}
        \Phi(v)\Phi(w)
              &=& t_{01}[t_{(1,2)_{k+2}}+t_{(1,2)_k}]t_{x}\\
        &=& t_{01*(1,2)_{k+2}*x}+t_{01*(1,2)_{k}*x}\\
        &=& t_{\phi(w)}+t_{\phi(w')}.
      \end{eqnarray*}
      Meanwhile, since $\bar a=b$ and $a\circ b=\varnothing$ in $A$, 
      \[
        v\cdot w=a\cdot bab\cdots = abab\cdots + ab\cdots=w+w' 
      \]
      in $R$, therefore $\Phi(v\cdot w)=t_{\phi(w)}+t_{\phi(w')}$ as well.
      Equation \eqref{eq:check Phi} follows.
  \end{enumerate}
  The proof for the case $l(v)=1$ and $v=b$ is similar.

  For the inductive step of our proof, assume Equation \eqref{eq:check Phi}
  holds whenever $v$ is nonempty and $l(v)< L$ for some $L\in \N$,
  and suppose $l(v)=L$. Let $\alpha\in A$ be the first
  letter of $v$, and write $v=\alpha v'$. Then $l(v')<L$, and by \eqref{eq:ffr},
  \[
    a\cdot v'=v+\sum_{u\in U} u
  \]
  where $U$ is a subset of $\ip{A}$ where all words are of length smaller than
  $L$. Using the inductive hypothesis on $\alpha, v',u$ and the
  $\Z$-linearity of $\Phi$, we have
  \begin{eqnarray*}
    \Phi(v)\Phi(w)&=& \Phi\left(\alpha \cdot v'- \sum_{u\in U} u\right)\Phi(w)\\
    &=& \Phi(\alpha)\Phi(v')\Phi(w)-\sum_{u\in U}\Phi(u)\Phi(w)\\
    &=& \Phi(\alpha)\Phi(v'\cdot w)-\Phi\left(\sum_{u\in U}u\cdot w\right).\\
  \end{eqnarray*} 
  Here, the element $v'\cdot w$ may be a linear combination of multiple words in
  $R$, but applying the inductive hypothesis on $\alpha$ still yields 
  \[
    \Phi(\alpha)\Phi(v'\cdot w)=\Phi(\alpha\cdot(v'\cdot w))
  \]
  by the $\Z$-linearity of $\Phi$ and $\cdot$. Consequently,
  \begin{eqnarray*}
    \Phi(v)\Phi(w)    &=& \Phi(\alpha\cdot (v'\cdot w))-\Phi\left(\sum_{u\in U} u\cdot w\right)\\
    &=& \Phi\left(  (\alpha\cdot v')\cdot w- \sum_{u\in U}u\cdot w\right)\\
    &=& \Phi\left(  \left[(\alpha\cdot v')- \sum_{u\in U}u\right]\cdot
    w\right)\\
    &=& \Phi(v\cdot w).
  \end{eqnarray*}
  by the associativity of $\cdot$ and the $\Z$-linearity of $\Phi$ and $\cdot$.
  This completes the proof that $\Phi$ is an algebra isomorphism. 
  
  The fact that
  $\Phi$ is in addition an isomorphism of based rings is straightforward to check.
  In particular, observe that $\phi(\bar w)=\phi(w)^{-1}$ so that  $\Phi(\bar
  w)=t_{\phi(\bar w)}=t_{\phi(w)\inverse}=(\Phi(w))^*$, therefore $\Phi$ is
  compatible with the respective
  involutions in $R$ and $J_0$. We omit the details of the other necessary
  verifications.
\qed\end{proof6.1}

\subsection{Example 3: ${Z_N^+(\{e\},n-1)}$}
\label{sec:example3} Let $[n]=\{1,2,\cdots,n\}$ for each $n\in Z_{\ge 1}$. In this subsection, we consider an infinite family of fusion rings $\{R_n: n\in
  \Z_{\ge 2}\}$, where each $R_n$ arises from the
  fusion set
  \[
    A_n=\{e_{ij}: i,j\in [n]\}
  \]
  with $\bar e_{ij}=e_{ji}$ for all $i,j\in [n]$ and 
  \[
    e_{ij}\circ e_{kl}=
    \begin{cases}
      e_{il} &\quad\text{if}\quad j=k\\
      \varnothing &\quad\text{if}\quad j\neq k
    \end{cases}
  \]
  for all $i,j,k,l\in [n]$. We may think of the fusion set as the usual matrix
  units for $n\times n$ matrices and think of the fusion map as an analog of
  matrix multiplication, with the fusion product being $\varnothing$ whenever the matrix
  product is 0. In the notation of \cite{Freslon-1}, the partition quantum group
  corresponding to $R_n$ is denoted by $Z_N^+(\{e\},n-1)$, which is the
  \emph{amalgamated free product} of $(n-1)$ copies of $\tilde H_N^+$ amalgamated
  along $S_N^+$, where $S_N^+$ stands for the \emph{free symmetric group},
  $H_N^+$ stands for the \emph{free
  hyperoctohedral group}, and $\tilde H_N^+$ stands for the \emph{free
  complexification} of $H_N^+$. In
  particular, $R_2=\tilde H_N^+$.  

  For $n\in \Z_{\ge 2}$, let $(W_n,S_n)$ be the Coxeter system where
  $S_n=[n]$, $m(0,i)=\infty$ for all $i\in [n]$, $m(i,i+1)=3$ for
  all $i\in [n-1]$, and $m(i,j)=2$ otherwise. The Coxeter diagrams $G_n$ of
  $(W_n, S_n)$ are shown in Figure \ref{fig:Rn}, where the thick edges have weight
      $\infty$ and the remaining edges have
    weight 3.

  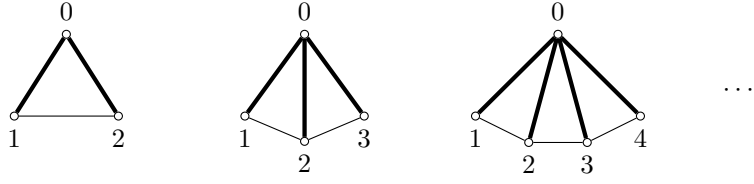
\begin{figure}[h!]
    \begin{centering}
      \begin{tikzpicture}
        \node (4) {}; 
        \node[main node] (5) [above right = 0.5cm and 1.5cm of 4] {};
        \node (55) [above = 0cm of 5] {0}; 
        \node[main node] (6) [below left = 1cm and 0.6cm of 5] {};
        \node (66) [below = 0cm of 6] {1}; 
        \node[main node] (7) [below right = 1cm and 0.6cm of 5] {};
        \node (77) [below = 0cm of 7] {2}; 

        \node (8) [below right = 0.5cm and 0.8cm of 5]
        {};

        \node (9)[main node] [above right = 0.5cm and 2cm of 8]
        {};
        \node (99) [above = 0cm of 9] {0}; 
        \node[main node] (10) [below left = 1cm and 0.7cm of 9]
        {};
        \node (1010) [below = 0cm of 10] {1}; 
        \node[main node] (11) [below =1.3cm of 9] {};
        \node (1111) [below = 0cm of 11] {2}; 
        \node[main node] (12) [below right = 1cm and 0.7cm of 9]
        {};
        \node (1212) [below = 0cm of 12] {3}; 

        \node (13) [below right = 0.5cm and 1cm of 9]
        {};

        \node (14)[main node] [above right = 0.5cm and 2cm of 13]
        {};
        \node (1414) [above = 0cm of 14] {0}; 
        \node[main node] (15) [below left = 1cm and 1cm of 14]
        {};
        \node (1515) [below = 0cm of 15] {1}; 
        \node[main node] (16) [below left = 1.35cm and 0.3cm of 14] {};
        \node (1616) [below = 0cm of 16] {2}; 
        \node[main node] (17) [below right = 1.35cm and 0.3cm of 14] {};
        \node (1717) [below = 0cm of 17] {3}; 
        \node[main node] (18) [below right = 1cm and 1cm of 14]
        {};
        \node (1818) [below = 0cm of 18] {4}; 

        \node (19) [below right = 0.5cm and 2cm of 14] {$\cdots$};
        \path[draw,ultra thick]
        (5) edge node [left] {} (6)
        (5) edge node [right] {} (7)
        (9) edge node [left] {} (10)
        (9) edge node {} (11)
        (9) edge node [right] {} (12)
        (14) edge node {} (15)
        (14) edge node {} (16)
        (14) edge node {} (17)
        (14) edge node {} (18);
        \path[draw]
        (6) edge node [below] {} (7)
        (10) edge node [below] {} (11)
        (11) edge node [below] {} (12)
        (15) edge node [below] {} (16)
        (16) edge node [below] {} (17)
        (17) edge node [below] {} (18);
      \end{tikzpicture}
    \end{centering}
    \caption{The Coxeter diagrams of $(W_n,S_n)$}
    \label{fig:Rn}
  \end{figure}
  Let $J_0^{(n)}$ denote the subring $J_0$ of the subregular $J$-ring of $(W_n,S_n)$.
  \begin{theorem}
    \label{amalgamate}
    For each $n\in \Z_{\ge 2}$,  $R_n\cong J_0^{(n)}$ as
    based rings.
  \end{theorem}
  For each $n\ge 2$, our strategy to prove the isomophism $R_n\cong J_0^{(n)}$ is
  similar to the strategy for Theorem \ref{unitary}. That is, we will first describe a
  bijection $\phi: \ip{A_n}\ra \Gamma_0\cap\Gamma_0\inverse$, then show that the
  $\Z$-module map $\Phi: R_n\ra J_0^{0}$ given by $\Phi(w)=t_{\phi(w)}$ is an
  isomorphism of based rings.

  To describe $\phi$, 
  note that for $i,j\in [n]$, there is a unique shortest walk $P_{ij}$ from
  $i$ to $j$ on the ``bottom part'' of $G_n$, i.e., on the subgraph of $G_n$
  induced by the vertex subset $[n]$. We $\phi(e_{ij})$ to be the element in
  $\Gamma_{0}\cap\Gamma_0\inverse$ corresponding to the walk
  on
  $G$ that starts from 0, travels to $i$ along the edge $\{0,i\}$, traverses to
  $j$ along the path $P_{ij}$, then returns to $0$ along the edge $\{0,j\}$. For example, when $n= 4$,
  $\phi(e_{24})=02340, \phi(e_{43})=0430, \phi(e_{44})=040$. 
  Next, for any word $w$ in $\ip{A_n}$, we define $\phi(w)$ to be the glued product
  of the $\phi$-images of its letters. For example,
  $\phi(e_{24}e_{43}e_{44}e_{44})=023404304040$.  By considering the walks
  $\{P_x:x\in \Gamma_0\cap\Gamma_0\inverse\}$, it is easy to see that
  $\phi:\ip{A}\ra \Gamma_0\cap\Gamma_0\inverse$ is a bijection.

  Before we prove Theorem \ref{amalgamate}, let us record one useful lemma: 
  \begin{lemma}
    \label{path mult}
    Let $x_{ij}=i\cdots j$ be the element in $C$ corresponding to
    the walk $P_{ij}$ for all $i,j\in [n]$. Then
    $t_{x_{ij}}t_{x_{jk}}=t_{x_{ik}}$ for all $i,j,k\in [n]$.
  \end{lemma}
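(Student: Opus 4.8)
The plan is to reduce Lemma~\ref{path mult} to the simply-laced machinery already established, and in fact to recognize $t_{x_{ij}}t_{x_{jk}}=t_{x_{ik}}$ as a special case of Theorem~\hyperref[simply-laced]{A} applied to a simply-laced subsystem. The key observation is that the ``bottom part'' of $G_n$, namely the subgraph induced by the vertex set $[n]=\{1,\dots,n\}$, is a path graph $1-2-\cdots-n$ with all edges of weight $3$, hence a \emph{simply-laced} Coxeter system. The elements $x_{ij}$ all lie in the subregular cell of this sub-system (their reduced words $i(i{+}1)\cdots j$ or $i(i{-}1)\cdots j$ have all consecutive pairs joined by weight-$3$ edges, so Proposition~\ref{subregular criterion} applies), and the product $t_{x_{ij}}t_{x_{jk}}$ computed in $J_C$ of the big system agrees with the product computed in the subregular $J$-ring of the path sub-system, because Theorem~\hyperref[dihedral factorization]{F} and Propositions~\ref{d2}, \ref{d3} only ever reference the dihedral segments of the words involved, all of which live entirely within the sub-system.

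Concretely, the steps I would carry out are: First, note that by Theorem~\hyperref[dihedral factorization]{F} we may factor each $t_{x_{ij}}$ into the product of $t$'s of its dihedral segments, each of which is a single edge $t_{(a,b)_2}=t_{ab}$ with $\{a,b\}$ a weight-$3$ edge. Second, under the bijection $C \leftrightarrow \mathcal{P}$ of \cref{sec:simply-laced} for the path graph, the word $x_{ij}$ corresponds to the walk $P_{ij}=(i,i{\pm}1,\dots,j)$, which is already spur-free (the path graph has no branching, and $x_{ij}$ is a monotone walk so no vertex repeats adjacently). Third, apply Theorem~\hyperref[simply-laced]{A}: the isomorphism $\Phi$ sends $t_{x_{ij}}\mapsto [P_{ij}]$, and concatenation of the spur-free walks $P_{ij}$ and $P_{jk}$ gives a walk from $i$ to $k$ along the path which, after removing any spurs, is exactly $P_{ik}$ (since in a path graph the only walk from $i$ to $k$ with no spurs is the monotone one $P_{ik}$). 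Hence $[P_{ij}]\cdot[P_{jk}]=[P_{ik}]$ in $\Z\Pi(G)$, and pulling back through $\Phi$ gives $t_{x_{ij}}t_{x_{jk}}=t_{x_{ik}}$.

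Alternatively — and this may be cleaner to write — one can give a direct inductive argument using only Propositions~\ref{d2} and~\ref{d3}: if the last dihedral segment of $x_{ij}$ and the first dihedral segment of $x_{jk}$ involve different pairs of letters (which happens unless the walk $P_{ik}$ backtracks at $j$), then $t_{x_{ij}}t_{x_{jk}}=t_{x_{ij}*x_{jk}}=t_{x_{ik}}$ by Proposition~\ref{d2}; if they share the pair $\{j{-}1,j\}$ or $\{j,j{+}1\}$, the shared dihedral segments are $t_{(j{\mp}1,j)}$ and $t_{(j,j{\mp}1)}$, whose product is $t_{j{\mp}1}$ by the truncated Clebsch--Gordan rule (Proposition~\ref{d3} with $k=l=2$, $M=3$), and then Corollary~\ref{head} absorbs the singleton $t_{j{\mp}1}$ into the neighboring factor, again yielding $t_{x_{ik}}$ after re-gluing via Theorem~\hyperref[dihedral factorization]{F}. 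I expect the main obstacle to be purely bookkeeping: carefully tracking which of the two cases occurs at the junction vertex $j$ (i.e.\ whether $P_{ij}$ and $P_{jk}$ approach/leave $j$ from the same side), and confirming that in the backtracking case the cancellation of spurs in the walk picture matches the algebraic absorption $t_{j{\mp}1}\cdot t_{(j,\dots)}=t_{(j,\dots)}$ from Corollary~\ref{head}. Since $G_n$ restricted to $[n]$ is a path and the $P_{ij}$ are monotone, there are only finitely many configurations to check, so the argument is short.
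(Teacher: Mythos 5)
Your proposal is correct and matches the paper's own proof, which likewise offers exactly these two routes: a direct case analysis at the junction vertex using Proposition \ref{d3} (the truncated Clebsch--Gordan rule), and the observation that $0$ does not occur in any $x_{ij}$, so the computation reduces to the subregular $J$-ring of the simply-laced ``bottom'' path, where Theorem \hyperref[simply-laced]{A} gives the result immediately.
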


  \begin{proof}
    We can verify this by considering all possible relationships between
    $i,j,k$ and directly computing $t_{x_{ij}}t_{x_{jk}}$ in each case as
    discussed in Section \ref{sec:product computation}. For example, in the
    case $i<j<k$, the claim follows from Proposition \ref{d2}.
  \qed\end{proof}

  \begin{proof6.2}
    Let $n\ge 2$, and let $\phi$ and $\Phi$ be as above. As in the proof of
    Theorem \ref{unitary}, we show that $\Phi$ is an algebra isomorphism by
    checking that 
    \begin{equation}
      \Phi(v)\Phi(w)=\Phi(v\cdot w)
      \label{eq:check Phi again}
    \end{equation}
    for all $v,w\in \ip{A_n}$. Once again, we may assume that both $v$ and $w$
    are non-empty and use induction on the length $l(v)$ of $v$. The inductive
    step of the proof will be identical with the one for Theorem
  \ref{unitary}.
    For the base case where $l(v)=1$, suppose $v=e_{ij}$ for some $i,j\in [n]$.
    There are two cases.  
    \begin{enumerate}[leftmargin=2em]
      \item Case 1: $w$ starts with a letter $e_{j'k}$ where $j'\neq j$.  \\
        Then $\phi(v)$ and $\phi(w)$ take the form $\phi(v)=\cdots
        j0,\phi(w)=0j'\cdots$, so 
        \[
          \Phi(v)\Phi(w)=t_{\cdots j0}t_{0j'\cdots}=t_{\cdots j0 *
          0j'\cdots}=t_{\phi(e_{ij})*\phi(w)}=t_{\phi(e_{ij}w)}
        \]
        by Proposition \ref{d2}. Meanwhile,
        since $\bar e_{ij}\neq e_{j'k}$ and $e_{ij}\circ e_{j'k}=\emptyset$ in
        $A_n$, 
        \[ 
          v\cdot w=e_{ij}w
        \]
        in $R$,
        therefore 
        $
        \Phi(v\cdot w)=t_{\phi(e_{ij}w)}
        $
        as well. Equation \eqref{eq:check Phi again} follows.
      \item Case 2: $w$ starts with $e_{jk}$ for some $k\in [n]$. \\
        Write $w=e_{jk}w'$. We need to consider four subcases, according to how they affect
        the dihedral segments of $\phi(v)$ and $\phi(w)$.
        \begin{enumerate}
          \item $i=j=k$.
            Then $v=e_{jj}$, $\phi(v)=0j0=(0,j)_3$ (see Definition
            \ref{subregular graph}), and $w$ starts with
            $e_{jj}\cdots$, hence $\phi(w)$ starts with $0j0\cdots$. Suppose the
            first dihedral segment of $\phi(w)$ is $(0,j)_L$, and write
            $\phi(w)=(0,j)_L* x$. Then Theorem
            \ref{dihedral factorization} and Propositions \ref{d2} and
            \ref{d3} yield
            \begin{eqnarray*}
              \Phi(v)\Phi(w)&=& t_{(0,j)_3}t_{(0,j)_L}t_{x}\\
              &=&
              t_{(0,j)_{L+2}*{x}}+t_{(0,j)_{L}*{x}}+t_{(0,j)_{L-2}*{x}}\\&=&
              t_{\phi(e_{jj}w)}+t_{\phi(w)}+t_{\phi(w')},
            \end{eqnarray*}
            while
            \[
              v\cdot w=e_{jj}\cdot e_{jj}w'=e_{jj}e_{jj}w' + e_{jj}w'+{w'}=e_{jj}w+w+w'
            \]
            since $\bar e_{jj}=e_{jj}$ and $e_{jj}\circ e_{jj}=e_{jj}$. It follows that
            Equation \eqref{eq:check Phi again} holds. 
          \item $i=j$, but $j\neq k$. In this case, $v=e_{jj}, \phi(v)=(0,j)_3$ as in
            (a), while $\phi(w)=0j*x$ for some reduced word $x$ which starts with $j$ but
            not $j0$. We have
            \[
              \hspace{3em}
              \Phi(v)\Phi(w)=t_{0j0}t_{j0}t_{x}=t_{0j0j*x}+t_{0j*x}=t_{\phi(e_{jj}w)}+t_{\phi(w)},
            \]
            while 
            \[
              v\cdot w=e_{jj}\cdot e_{jk}w'=e_{jj}e_{jk}w'+e_{jk}w'={e_{jj}}w+w
            \]
            since $\bar e_{jj}\neq e_{jk}$ and $e_{jj}\circ e_{jk}=e_{jk}$. This implies
            Equation \eqref{eq:check Phi again}.
          \item $i\neq j$, but $j=k$. In this case, $v=e_{ij}$ and $\phi(v)=y*j0$ for
            some reduced word $y$ which ends in $j$ but not $0j$, and $\phi(w)$ can be written as
            $\phi(w)=(0,j)_{L}*x$ as in (a). We have
            \begin{eqnarray*}
              \Phi(v)\Phi(w)&=& t_{y}t_{j0}t_{(0,j)_{L}}t_{x}\\
              &=&
              t_{y*(j,0)_{L+1}*{x}}+t_{y*(j,0)_{L-1}*{x}}\\
              &=&  t_{\phi(e_{ij}w)}+t_{\phi(e_{ij}w')},
            \end{eqnarray*}
            while 
            \[
              v\cdot w=e_{ij}\cdot e_{jj}w'=e_{ij}w+e_{ij}w'
            \]
            since $\bar e_{ij}\neq e_{jj}$ and $e_{ij}\circ e_{jj}=e_{ij}$. This
            implies Equation \eqref{eq:check Phi again}.
          \item $i\neq j$, and $j\neq k$. In this case, $\phi(v)=0i*x_{ij}*j0$
            (recall the definition of $x_{ij}$ from Lemma \ref{path mult}), and
            $\phi(w)=0j*x_{jk}*x$ for some $x$ which starts with $k0$. We have
            \begin{eqnarray*}
              \Phi(v)\Phi(w)&=& t_{0i}t_{x_{ij}}t_{j0}t_{0j}t_{x_{jk}}t_x\\
              &=&
              t_{0i}t_{x_{ij}}t_{j0j}t_{x_{jk}}t_{x}+t_{0i}t_{x_{ij}}t_{j}t_{x_{jk}}t_{x}\\
              &=& t_{0i*x_{ij}*j0j*x_{jk}*x}+t_{0i}t_{x_{ij}}t_{x_{jk}}t_{x}\\
              &=& t_{\phi(e_{ij}w)}+t_{0i}t_{x_{ik}}t_{x},
            \end{eqnarray*}
            where $t_{x_{ij}}t_{x_{jk}}=t_{x_{ik}}$ by Lemma \ref{path
            mult}. Now, if $i\neq k$, then
            $t_{0i}t_{x_{ik}}t_{x}=t_{0i*x_{ik}*x}=t_{\phi(e_{ik}w')}$, so
            \[
              \Phi(v)\Phi(w)=t_{\phi(e_{ij}w)} + t_{\phi(e_{ik}w')}.
            \] 
            If $i=k$, note that $t_{0i}t_{ik}t_{x}=t_{0k}t_kt_{x}=t_{0k}t_x$. Suppose the first dihedral segment of $x$ is
            $(k,0)_{L'}$ for some $L'\ge 2$, and write $x=(k,0)_{L'}* x'$. Then
            $t_{0k}t_{x}=t_{0k}t_{(k,0)_{L'}}t_{x'}=
            t_{(0,k)_{L'+1}*x'}+t_{(0,k)_{L'-1}*x'}=t_{\phi(e_{kk}w')+\phi(w')}$,
            so
            \[
              \Phi(v)\Phi(w)=t_{\phi(e_{ij}w)}+t_{\phi(e_{ik}w')}+t_{\phi(w')}.
            \]
                        Meanwhile, 
            \[
              \hspace{4.5em} v\cdot w= e_{ij}\cdot e_{jk}w'
              =e_{ij}e_{jk}w'+e_{ik}w'+\delta_{ik}e_{w'}
              =e_{ij}w+e_{ik}w'+\delta_{ik}e_{w'}  
            \]
            because $\bar e_{ij}=e_{ji}$ and $e_{ij}\circ e_{jk}=e_{ik}$,
            therefore Equation \eqref{eq:check Phi again} holds again.
        \end{enumerate}
    \end{enumerate}
    We have proved that $\Phi$ is an algebra isomorphism. As in Theorem
    \ref{unitary}, the fact that $\Phi$ is in
    addition an isomorphism of based rings is easy to check, and we omit
    the details.
  \qed\end{proof6.2}

\begin{acknowledgements} It is my great pleasure to thank Victor Ostrik for numerous helpful suggestions. I am very grateful to
Alexandru Chirvasitu and Amaury Freslon for helpful discussions about free fusion rings
and compact quantum groups. I would also like to acknowledge the
mathematical software {\tt SageMath} (\cite{sagemath}), which was used
extensively in our computations.
\end{acknowledgements}



\bibliographystyle{spmpsci}      
\bibliography{subregular_Tianyuan_Xu.bib}   



\end{document}